\nonstopmode \numberwithin{equation}{section}
\newtheorem{theorem}{Theorem}[section]
\newtheorem{corollary}{Corollary}[section]
\newtheorem{remark}{Remark}[section]
\begin{document}
\bibliographystyle{amsplain}

\title{{{
Order of Starlikeness and Convexity of certain integral transforms using duality techniques
}}}

\author{
Satwanti Devi
}
\address{
Department of  Mathematics  \\
Indian Institute of Technology, Roorkee-247 667,
Uttarkhand,  India
}
\email{ssatwanti@gmail.com}

\author{
A. Swaminathan
}
\address{
Department of  Mathematics  \\
Indian Institute of Technology, Roorkee-247 667,
Uttarkhand,  India
}
\email{swamifma@iitr.ernet.in, mathswami@gmail.com}

\bigskip

\begin{abstract}
For $\alpha\geq 0$, $\beta<1$ and $\gamma\geq 0$, the class
$\mathcal{W}_{\beta}(\alpha,\gamma)$ satisfies the condition
\begin{align*}
{\rm Re\,} \left( e^{i\phi}\left((1-\alpha+2\gamma)f/z+(\alpha-2\gamma)f'+
\gamma zf''-\beta\right)\frac{}{}\right)>0, \quad \phi\in {\mathbb{R}},{\,}z\in {\mathbb{D}};
\end{align*}
is taken into consideration. The Pascu class  of $\xi$-convex
functions of order $\sigma$ $(M(\sigma,{\,}\xi))$, having
analytic characterization
\begin{align*}
{\rm Re\,}\frac{\xi z(zf'(z))'+(1-\xi)zf'(z)}{\xi zf'(z)+(1-\xi)f(z)}>\sigma,\quad 0\leq \sigma< 1,\quad z\in {\mathbb{D}},
\end{align*}
unifies starlike and convex functions class of order $\sigma$.
The admissible and sufficient conditions on $\lambda(t)$ are
investigated so that the integral transforms
\begin{align*}
V_{\lambda}(f)(z)= \int_0^1 \lambda(t) \frac{f(tz)}{t} dt,
\end{align*}
maps the function from $\mathcal{W}_{\beta}(\alpha,\gamma)$
into $M(\sigma,{\,}\xi)$. Further several interesting
applications, for specific choice of $\lambda(t)$ are discussed
which are related to the classical integral transform.
\end{abstract}

\subjclass[2000]{30C45, 30C55, 30C80}

\keywords{Duality technique, Starlike functions, Convex
functions, Pascu class, Integral transforms, Hadamard product.}

\maketitle

\pagestyle{myheadings}
\markboth{
}{
Order of starlikeness and convexity of integral transforms
}

\section{introduction}
Consider the class $\mathcal{A}$ of all normalized and analytic
function $f$ satisfying $f(0)=f'(0)-1=0$, in the open unit disk
$\mathbb{D}=\{ z\in\mathbb{C}: |z|<1\}$. Let $\mathcal{S}$
denotes the subclass of $\mathcal{A}$ consisting of the
univalent functions in $\mathbb{D}$. A function $f(z) \in
{\mathcal {S}}$ is said to be starlike $(\mathcal{S}^{\ast}),$
if $f(\mathbb{D})$ is a domain which is starlike with respect
to the origin. Further generalization of the class
$\mathcal{S}^{\ast}$ is the class $\mathcal{S}^{\ast}(\sigma)$
having analytic characterization
\begin{align*}
\mathcal{S}^\ast(\sigma)=\left\{f\in\mathcal{S}:{\,}{\rm{Re}}{\,}
\left(\dfrac{zf^\prime(z)}{f(z)}\right)>\sigma;\quad 0\leq\sigma<1,{\,} z\in\mathbb{D}\right\}.
\end{align*}
Corresponding to the class $\mathcal{S}^\ast(\sigma)$, is the
class of convex functions of order $\sigma$,
$(\mathcal{C}(\sigma))$, satisfying the Alexander transform
\cite{DU} that when
$f\!\in\!\mathcal{C}(\sigma)\!\Longleftrightarrow\! zf'\!\in
\!\mathcal{S}^\ast(\sigma)$.
Note that ${\mathcal{S}}^\ast(0)= {\mathcal{S}}^\ast$ and ${\mathcal{C}}(0)={\mathcal{C}}$.
For a function $f\in\mathcal{A}$,
$M(\sigma,\xi)$ denotes the Pascu class of $\xi$-convex
functions of order $\sigma<1$, if it satisfies the analytic
condition \cite{Pascu}
\begin{align*}
{\rm Re}{\,} \dfrac{\xi z(zf^{\prime}(z))^{\prime}+(1-\xi)zf^{\prime}(z)}
{\xi zf^{\prime}(z)+(1-\xi)f(z)}>\sigma,\quad 0\leq\xi\leq1,
\end{align*}
or equivalently
\begin{align*}
\xi zf^{\prime}(z)+(1-\xi)f(z)\in \mathcal{S}^\ast(\sigma)
\end{align*}
Note that $M(\sigma,0)\equiv \mathcal{S}^{\ast}(\sigma)$ and
$M(\sigma,1)\equiv \mathcal{C}(\sigma)$ which implies that the
Pascu class $M(\sigma,\xi)$ unifies the class of starlike and
convex functions of order $\sigma$ i.e., it is the convex
combination of the class of starlike and convex  functions. It is interesting to note that the class $M(\sigma,\xi)$
also contains some non-univalent functions.

If $f(z)=\displaystyle\sum_{n=0}^{\infty}a_n z^n$ and
$g(z)=\displaystyle\sum_{n=0}^{\infty}b_n z^n$ are in
$\mathcal{A}$ then the convolution (Hadamard product) of $f$
and $g$ is given by
\begin{align*}
h(z):=(f\ast g)(z)=\sum_{n=0}^{\infty}a_nb_n z^n.
\end{align*}

For a non-negative and real-valued integrable function $\lambda(t)$, satisfying $\displaystyle\int_0^1\lambda(t) dt=1$
and a function $f\in\mathcal{A}$, R. Fournier and S. Ruscheweyh \cite{Four-rus-extremal} introduced the integral operator
\begin{align}\label{eq-lambda-operator}
F(z):= V_{\lambda}(f)(z)=\int_0^1 \lambda(t) \dfrac{f(tz)}{t}dt=z\int_0^1 \dfrac{\lambda(t)}{1-tz}dt\ast\dfrac{f(z)}{z}.
\end{align}
For specific choice of $\lambda(t)$, integral operator
\eqref{eq-lambda-operator} reduces to the well-known operators
such as Bernardi, Komatu, Hohlov operators and several other
operators that are to be discussed in Section $\ref{sec-application}$. See \cite{Swami M, Four-rus-extremal, kim&ronning,
rag M} and references therein for these literature of these operators.

For given $\alpha \geq0$, $\gamma \geq 0$ and $\beta<1$, Ali et
al. \cite{Abeer S*} defined the class
{\small{
\begin{align*}
\mathcal{W}_\beta(\alpha, \,\gamma)\!=\!
\left\{\dfrac{}{}f\!\in\!\mathcal{A}:{\rm Re}\left(e^{i\phi}\left((1-\alpha+2\gamma)\dfrac{f(z)}{z}+(\alpha-2\gamma)
f^\prime(z)+\gamma zf^{\prime\prime}(z)-\beta\right)\right)>0, z\in\mathbb{D}\right\}
\end{align*}
}} for some $\phi\in \mathbb{R}$. Many authors applied the
duality theory \cite{Rus, Ru75} on this class and its
particular cases. They obtained relation between $\beta$ and
$\lambda(t)$ so that the integral operator given in
\eqref{eq-lambda-operator} is univalent or belongs to
$M(\sigma,{\,}\xi)$ for particular values of $\sigma$ and
$\xi$. Initially this work was motivated by R. Fournier and
S. Ruscheweyh \cite{Four-rus-extremal} by obtaining the conditions
so that $V_\lambda(\mathcal{W}_\beta (1,{\,} 0))\!\in\! M(0,0)$.
Further, the conditions under which
$V_\lambda(\mathcal{W}_\beta (\alpha,{\,} \gamma))\!\in\! M(0,0)$
was discussed by the second author with R.M. Ali et al. in \cite{Abeer S*} and the results
corresponding to convexity case i.e.,
$V_\lambda(\mathcal{W}_\beta (\alpha,{\,} \gamma))\!\in\! M(0,1)$
was given by R.M. Ali et al. in \cite{Mahnaz C}. Generalization of the results given in \cite{Abeer S*} and \cite{Mahnaz C} exhibiting
the conditions under which $V_\lambda(\mathcal{W}_\beta (\alpha,{\,} \gamma))\!\in\! M(0,\xi)$
was obtained by the authors of the present work in \cite{Swami M}. Note that the present work is not the direct
extension of the results given in \cite{Swami M} as the conditions obtained in Section $\ref{sec-main-result}$ differs from the respective
one given in \cite{Swami M}. Further details in this regard are provided in Section $\ref{sec-application}$.

Recently S. Verma et al. \cite{sarika-S*} (see also
\cite{Suzeini-starlike}) investigated the constraints such that
$V_\lambda(\mathcal{W}_\beta (\alpha,{\,} \gamma))\!\in\!
M(\sigma, 0)$ and corresponding convexity results was given by
R. Omar et al. \cite{Suzeini-convex} (see also \cite{sarika-C})
under which $V_\lambda(\mathcal{W}_\beta (\alpha,{\,}
\gamma))\!\in\! M(\sigma,1)$. For further details in this
direction we refer to \cite{Abeer S*, Mahnaz C, Swami M, rag M}
and references therein.

The main aim of this work is to investigates the condition on
$\lambda(t)$ using duality technique so that the integral
transform, $V_\lambda(f)\!\in\! M(\sigma, \xi)$ whenever
$f\!\in\! \mathcal{W}_\beta(\alpha,\gamma)$.
This requires certain preliminaries that are outlined in Section $\ref{sec-pascu-order-prelim}$.
In Section \ref{sec-main-result}, the necessary and sufficient conditions
are derived which ensures that $V_\lambda(f)(z)$ carries the
function $f(z)$ from $\mathcal{W}_\beta(\alpha,\gamma)$ into
$M(\sigma,\xi)$. The criteria for
$V_\lambda(\mathcal{W}_\beta(\alpha,\gamma))$ to be in
$M(\sigma, \xi)$ is simplified for further applications. In Section
\ref{sec-application}, using the sufficient condition, several
interesting applications for specific choice of $\lambda(t)$
are discussed.
\section{preliminaries}\label{sec-pascu-order-prelim}
We consider some of the preliminaries that are useful for further
discussion. Considering two constants $\mu\geq0$ and
$\nu\geq0$ which was introduced in \cite{Abeer S*}, satisfing
\begin{align}\label{eq-mu+nu}
\mu+\nu=\alpha-\gamma \quad \text{and} \quad\mu\nu=\gamma.
\end{align}

For $\gamma=0$, choose $\mu=0$, so $\nu=\alpha \geq 0$. Now for
the case $\alpha=1+2\gamma$, equation \eqref{eq-mu+nu} gives
$\mu+\nu=1+\gamma=1+\mu\nu$, or $(\mu-1)(1-\nu)=0$ which give
rise to two cases:
\begin{itemize}
\item[{\rm{(i)}}] If $\gamma>0,$ then for $\mu=1$, gives
    $\nu=\gamma$.
\item[{\rm{(ii)}}] If $\gamma=0$, then for $\mu=0$, gives
    $\nu=\alpha=1$.
\end{itemize}
Note: Since the case $\gamma=0$ is considered in \cite{rag M}, we will only consider
the case when $\gamma>0$.

Let
\begin{equation}\label{phi-series}
\phi_{\mu,{\,}\nu}(z)=1+\sum_{n=1}^{\infty }\dfrac{(n\mu+1)(n\nu +1)}{(n+1)}z^{n},
\end{equation}
and
\begin{eqnarray}\label{psi-series}
\psi_{\mu,{\,}\nu}(z)=\phi_{\mu,{\,}\nu}^{-1}(z)=1+\sum\limits_{n=1}^{\infty }\dfrac{(n+1)}{(n\mu +1)(n\nu+1)}z^{n}
=\int_0^1\int_0^1\frac{1}{(1-s^\mu t^\nu  z)^2}{\,}ds{\,}dt,
\end{eqnarray}
where $\phi_{\mu,{\,}\nu}^{-1}$ gives the convolution inverse
of $\phi_{\mu,{\,}\nu}$
such that $\phi_{\mu,{\,}\nu}\ast{\,} \phi_{\mu,{\,}\nu}^{-1}=1/(1-z)$.\\
For $\gamma=0$, implies $\mu=0$ and $\nu=\alpha$. So it become clear that
\begin{align*}
\psi_{0,{\,}\alpha}(z)&=1+\sum\limits_{n=1}^{\infty}\frac{n+1}{n\alpha+1}z^n
=\int_0^1\frac{1}{(1-t^\alpha z)^2}{\,} dt.
\end{align*}
The case $\gamma >0$, implies $\mu >0$ and $\nu >0$. Now
changing the variables $u =t^{\nu }$ and $v =s^{\mu }$ will
give
\begin{align*}
\psi_{\mu,{\,}\nu}(z)=\frac{1}{\mu \nu
}\int_{0}^{1}\int_{0}^{1}\frac{u^{1/\nu -1}v^{1/\mu
-1}}{(1-uvz)^{2}}dudv.
\end{align*}
Therefore, we can write $\psi_{\mu,{\,}\nu}$ as
\begin{align*}
\psi_{\mu,{\,}\nu}(z)=\displaystyle\left\{
\begin{array}{cll}&\displaystyle\dfrac{1}{\mu\nu}\int_0^1\int_0^1\dfrac{u^{1/\nu-1}v^{1/\mu-1}}{(1-uvz)^2}dudv,
\quad \gamma>0,\\ \\
&\displaystyle\quad \quad \int_0^1\dfrac{1}{(1-t^\alpha z)^2}dt, \quad\quad \quad \quad{\,} \gamma=0,{\,}
\alpha\geq0.
\end{array}\right.
\end{align*}
Consider $g(t)$ to be the solution of initial-value problem
\begin{align}\label{de-g}
\frac{d}{dt}t^{1/\nu}(1+g(t))=\left\{
\begin{array}{cll}&\displaystyle
\dfrac{2}{\mu\nu}t^{1/{\nu-1}}\int_0^1 \dfrac{1-\sigma(1+st)}{(1-\sigma)(1+st)^2}s^{1/\mu-1}ds, \quad
\gamma>0,\\\\
&\displaystyle\quad \quad \dfrac{2}{\alpha}t^{1/\alpha-1}\dfrac{1-\sigma(1+t)}{(1-\sigma)(1+t)^2}, \quad\quad\quad \quad
\quad {\,}\gamma=0,{\,} \alpha>0,
\end{array}\right.
\end{align}
satisfying $g(0)=1$. Solution of differential equation
\eqref{de-g} is given by
\begin{align}\label{int-g(t)}
g(t)=\left\{
\begin{array}{cll}&\displaystyle
\dfrac{2}{\mu\nu}\int_0^1\int_0^1 \dfrac{1-\sigma(1+swt)}{(1-\sigma)(1+swt)^2}s^{1/\mu-1}w^{1/\nu-1}dsdw-1,\quad
\gamma>0,\\\\
&\displaystyle \quad\quad\dfrac{2}{\alpha}t^{-1/\alpha}\int_0^t\dfrac{1-\sigma(1+s)}{(1-\sigma)(1+s)^2}s^{1/\alpha-1}ds-1, \quad
\quad \quad \quad\gamma=0,{\,} \alpha>0.
\end{array}\right.
\end{align}
Further consider $q(t)$ be the solution of the initial-value
problem
\begin{align}\label{de-q}
\dfrac{d}{dt}(t^{1/\nu}q(t))=\left\{
\begin{array}{cll}&\displaystyle\dfrac{1}{\mu\nu}
t^{1/{\nu-1}}\int_0^1\dfrac{1-\sigma-(1+\sigma)st}{(1-\sigma)(1+st)^3}{s^{1/\mu-1}}ds, \quad
\gamma>0,\\\\
&\displaystyle \quad \quad\dfrac{1}{\alpha}{t^{1/\alpha-1}}\dfrac{1-\sigma-(1+\sigma)t}{(1-\sigma)(1+t)^3},  \quad\quad
\quad \quad \quad\gamma=0,{\,} \alpha>0,
 \end{array}\right.
\end{align}
satisfying $q(0)=0$. Solving differential equation \eqref{de-q}
gives
\begin{align}\label{int-q(t)}
q(t)=\left\{
\begin{array}{cll}&\displaystyle
\dfrac{1}{\mu\nu}\int_0^1\int_0^1 \dfrac{1-\sigma-(1+\sigma)swt}{(1-\sigma)(1+swt)^3}s^{1/\mu-1}w^{1/\nu-1}dsdw, \quad
{\,}\gamma>0,\\\\
&\displaystyle\quad\quad\dfrac{1}{\alpha}t^{-1/\alpha}\int_0^t\dfrac{1-\sigma-(1+\sigma)s}{(1-\sigma)(1+s)^3}s^{1/\alpha-1}ds,\quad\quad\quad\quad
\quad\gamma=0,{\,} \alpha>0.
\end{array}\right.
\end{align}
S. Verma et al. \cite{sarika-S*} and R. Omar et al.
\cite{Suzeini-convex} (see also \cite{Suzeini-starlike,
sarika-C}) have established the necessary and sufficient
conditions under which the integral operator $V_\lambda(f(z))$
carries the function $f(z)$ from $\mathcal{W}_\beta(\alpha,
\gamma)$, to the classes $\mathcal{S^*}(\sigma)$ and
$\mathcal{C}(\sigma)$, respectively, which are given in the
following two results.
\begin{theorem}\label{sarika-W-class-S*}\cite{Suzeini-starlike, sarika-S*}
If $\mu\geq 0$, $\nu\geq 0$ satisfies \eqref{eq-mu+nu}, and
$\beta < 1$ is given by
\begin{align*}
\frac{\beta}{1-\beta}=-\int_0^1\lambda(t)g(t)dt,
\end{align*}
where $g(t)$ is the solution of differential equation
\eqref{de-g}. Assume further that
$t^{1/\nu}\Lambda_{\nu}(t)\rightarrow 0$, and
$t^{1/\mu}\Pi_{\mu,{\,} \nu}(t)\rightarrow 0$ as $t\rightarrow
0^+.$ Then $F(z):=V_\lambda(\mathcal{W}_\beta(\alpha,
\gamma))\in\mathcal{S^*}(\sigma)$ if and only if
\begin{align}\label{sarika-W-class-S*-result}
\left\{
\begin{array}{cll}&\displaystyle
{\rm Re\,}{\int_0^1\Pi_{\mu,{\,}\nu}(t)t^{1/\mu-1}\left(
\dfrac{h_\sigma(tz)}{tz}-\dfrac{1-\sigma(1+t)}{(1-\sigma)(1+t)^2}\right)dt}\geq 0, & \gamma >0,\\
&\displaystyle{\rm Re\,}{\int_0^1\Pi_{0,{\,}\alpha}(t)t^{1/\alpha-1}\left(
\dfrac{h_\sigma(tz)}{tz}-\dfrac{1-\sigma(1+t)}{(1-\sigma)(1+t)^2}\right)dt}\geq 0, & \gamma=0,
 \end{array}\right.
\end{align}
where $\Lambda_\nu$, $\Pi_{\mu, \,\nu}$ and $h_\sigma$ are
defined as
\begin{align}\label{eqn-lambda-nu}
\Lambda_\nu(t)=\int_t^1\frac{\lambda(x)}{x^{1/\nu}}dx,\ \ \nu>0,
\end{align}
\begin{align}\label{eqn-Pi-nu-mu}
\Pi_{\mu, \,\nu}(t)=\left\{
\begin{array}{cll}&\displaystyle
\int_t^1\Lambda_\nu(x)x^{1/\nu-1-1/\mu}dx,
\quad \gamma>0 \quad(\mu>0,\nu>0),\\
&\displaystyle \quad\quad\quad\quad\Lambda_\alpha(t) ,\quad\quad\quad\quad \gamma=0 \quad(\mu=0,
\nu=\alpha>0),
 \end{array}\right.
\end{align}
and
\begin{align}\label{h_sigma(z)}
h_\sigma(z)=\frac{z\left(1+\dfrac{\epsilon+2\sigma-1}{2(1-\sigma)}z\right)}{(1-z)^2}, \quad |\epsilon|=1
\end{align}
respectively.
\end{theorem}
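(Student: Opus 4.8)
The plan is to recast the membership $V_\lambda(f)\in\mathcal{S}^\ast(\sigma)$ as a Hadamard non-vanishing condition and then to invoke the duality theory \cite{Rus, Ru75}. The starting point is the convolution characterisation of starlikeness of order $\sigma$: a normalised $F$ lies in $\mathcal{S}^\ast(\sigma)$ if and only if
\begin{align*}
\frac{1}{z}(F\ast h_\sigma)(z)\neq 0,\qquad z\in\mathbb{D},\ |\epsilon|=1,
\end{align*}
with $h_\sigma$ the very function \eqref{h_sigma(z)}; this is why $h_\sigma$ has been singled out in the statement. Taking $F=V_\lambda(f)$ and using that $V_\lambda$ is itself the Hadamard convolution with the kernel $z\int_0^1\lambda(t)/(1-tz)\,dt$ from \eqref{eq-lambda-operator}, commutativity and associativity of $\ast$ give $V_\lambda(f)\ast h_\sigma=f\ast V_\lambda(h_\sigma)$. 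Hence the desired inclusion is equivalent to
\begin{align*}
\frac{1}{z}\bigl(f\ast V_\lambda(h_\sigma)\bigr)(z)\neq 0,\qquad z\in\mathbb{D},\ |\epsilon|=1,
\end{align*}
required to hold for every $f\in\mathcal{W}_\beta(\alpha,\gamma)$.

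Next I would bring in the structure of $\mathcal{W}_\beta(\alpha,\gamma)$. The defining operator sends $z^n\mapsto((n-1)\mu+1)((n-1)\nu+1)z^{n-1}$ (a direct coefficient computation using $\mu+\nu=\alpha-\gamma$, $\mu\nu=\gamma$ from \eqref{eq-mu+nu}), which is exactly what couples the class to $\phi_{\mu,\nu}$ of \eqref{phi-series} and, through the convolution inverse, to $\psi_{\mu,\nu}$ of \eqref{psi-series}. Writing the class condition as ${\rm Re}\,(e^{i\phi}(P(f)(z)-\beta))>0$ with $P(f)(0)=1$, the extreme points of the closed convex hull of $\mathcal{W}_\beta(\alpha,\gamma)$ are the functions $f_x$, $|x|=1$, for which $P(f_x)(z)=\beta+(1-\beta)\frac{1+xz}{1-z}$ (these are precisely the half-plane maps with boundary through $\beta$ and value $1$ at the origin, realising the ``for some $\phi$'' freedom). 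The duality principle then reduces the non-vanishing of $\frac{1}{z}(f\ast V_\lambda(h_\sigma))$ over the whole class to the same non-vanishing over this one-parameter family; since the quantity equals $1$ at $z=0$, non-vanishing is equivalent to a fixed sign of its real part, which is the inequality to be produced. This reduction is what yields an \emph{if and only if}.

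The remaining, and most laborious, step is to turn the extreme-point condition into \eqref{sarika-W-class-S*-result}. Here I would insert the double-integral representation of $\psi_{\mu,\nu}$ from \eqref{psi-series}, apply Fubini, and integrate by parts in the $t$-variable; the auxiliary kernels $\Lambda_\nu$ and $\Pi_{\mu,\nu}$ of \eqref{eqn-lambda-nu}--\eqref{eqn-Pi-nu-mu} are precisely the iterated antiderivatives of $\lambda$ that surface in this process, and the hypotheses $t^{1/\nu}\Lambda_\nu(t)\to0$, $t^{1/\mu}\Pi_{\mu,\nu}(t)\to0$ as $t\to0^+$ are exactly what is needed to discard the boundary terms. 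The calibration $\beta/(1-\beta)=-\int_0^1\lambda(t)g(t)\,dt$ with $g$ from \eqref{de-g} is used to match the constant term of the reduced functional, thereby producing the subtracted quantity $\frac{1-\sigma(1+t)}{(1-\sigma)(1+t)^2}$ inside \eqref{sarika-W-class-S*-result}; this is the same expression that appears as the inhomogeneity of the equation \eqref{de-g} defining $g$, which is no coincidence.

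I expect the principal obstacles to be twofold. First, the duality reduction must be justified rigorously: one has to confirm that the dual family associated with $V_\lambda(h_\sigma)$ meets the admissibility hypotheses of \cite{Rus, Ru75}, so that passing to the extreme points of $\mathcal{W}_\beta(\alpha,\gamma)$ loses nothing and delivers necessity as well as sufficiency. Second, the integration-by-parts reduction is delicate bookkeeping, and the two regimes $\gamma>0$ (with the genuine double integral for $\psi_{\mu,\nu}$ and the iterated kernel $\Pi_{\mu,\nu}$) and $\gamma=0$ (where $\Pi_{0,\alpha}=\Lambda_\alpha$ collapses the outer integration) must be carried through separately, checking in each case that the stated boundary decay is precisely what removes the boundary contributions.
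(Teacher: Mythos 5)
Your outline reproduces, in its overall architecture, exactly the argument this paper runs for its Theorem~\ref{Thm-main-pascu}, of which the present statement is the $\xi=0$ special case (the paper itself does not prove this theorem but imports it from \cite{Suzeini-starlike, sarika-S*}): the convolution test with $h_\sigma$ from \eqref{h_sigma(z)}, the factorization $H=f'\ast\phi_{\mu,\nu}$ and hence $f'=H\ast\psi_{\mu,\nu}$ via \eqref{eq-mu+nu}--\eqref{psi-series} (your coefficient computation $z^n\mapsto((n-1)\mu+1)((n-1)\nu+1)z^{n-1}$ checks out), duality applied to $G=(H-\beta)/(1-\beta)$, and a terminal integration by parts in which $\Lambda_\nu$ and $\Pi_{\mu,\nu}$ of \eqref{eqn-lambda-nu}--\eqref{eqn-Pi-nu-mu} arise as iterated antiderivatives, the hypotheses $t^{1/\nu}\Lambda_\nu(t)\to0$ and $t^{1/\mu}\Pi_{\mu,\nu}(t)\to0$ kill the boundary terms, and the calibration $\beta/(1-\beta)=-\int_0^1\lambda(t)g(t)\,dt$ injects the inhomogeneity of \eqref{de-g} as the subtracted kernel in \eqref{sarika-W-class-S*-result} --- that last observation of yours is correct and is precisely the mechanism.

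There is, however, one genuinely false step at the pivot of your reduction: the claim that ``since the quantity equals $1$ at $z=0$, non-vanishing is equivalent to a fixed sign of its real part.'' This is not a valid inference: $e^{3z}$ is zero-free with value $1$ at the origin, yet ${\rm Re}\,e^{3z}$ changes sign in $\mathbb{D}$. The correct bridge is a second duality fact, which the paper invokes as \cite[Pg 23]{Rus}: for normalized $F$, one has $\frac1z\left(zF(z)\ast\frac{1+xz}{1+yz}\right)\neq0$ for \emph{all} $|x|=|y|=1$ and all $z\in\mathbb{D}$ if and only if ${\rm Re}\,F(z)>\frac12$; it is the quantification over the whole Möbius family, not the normalization at the origin, that yields the half-plane condition, and the threshold $\frac12$ (not merely ``a fixed sign'') is what gets absorbed, together with $\beta/(1-\beta)$, into \eqref{sarika-W-class-S*-result}. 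Relatedly, your identification of the test functions is off: the operative family is the two-parameter $G(z)=(1+xz)/(1+yz)$, $|x|=|y|=1$, and the reduction to it is \emph{not} an extreme-point argument --- the class with the ``for some $\phi$'' freedom is not convex, and in any case $f\mapsto(f\ast g)(z)/z$ is linear in $f$, so non-vanishing at extreme points never passes to a convex hull (extreme values $2$ and $-1$ already defeat it); what does the work is Ruscheweyh's duality principle, which you cite but then undercut with the extreme-point language. Your one-parameter family $(1+xz)/(1-z)$ can in principle be repaired via the rotation identity $(K\ast g(y\,\cdot))(z)=(K\ast g)(yz)$ absorbed by the quantifier over $z$, but as written it is both unjustified and too small to feed the \cite[Pg 23]{Rus} lemma that your plan is missing.
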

\begin{theorem}\label{sarika-W-class-C}\cite{Suzeini-convex, sarika-C}
If $\mu\geq 0$, $\nu\geq0$ satisfies $\eqref{eq-mu+nu}$, and
$\beta< 1$ is given by
\begin{align*}
\frac{\beta-1/2}{1-\beta}=-\int_0^1\lambda(t)q(t)dt,
\end{align*}
where $q(t)$ is the solution of differential equation
$(\ref{de-q})$. Further $\Lambda_\nu(t)$,
$\Pi_{\mu,{\,}\nu}(t)$ and $h_\sigma$ are given by equation
$\eqref{eqn-lambda-nu}$, $(\ref{eqn-Pi-nu-mu})$ and
$\eqref{h_sigma(z)}$. Assume that
$t^{1/\mu}\Lambda_\nu(t)\rightarrow 0$, and $t^{1/\nu}\Pi_{\mu,
\,\nu}(t)\rightarrow 0$ as $t\rightarrow 0^+.$ Then
\begin{align}\label{sarika-W-class-C-result}
\left\{
\begin{array}{cll}&\displaystyle
{\rm Re\,}
{\int_0^1\Pi_{\mu,{\,}\nu}(t)t^{1/\mu-1}\left(h'_\sigma(tz)-\dfrac{1-\sigma-(1+\sigma)t}{(1-\sigma)(1+t)^3}\right)dt}\geq
0, \quad \gamma >0,
\\
&\displaystyle{\rm Re\,}
{\int_0^1\Pi_{0, {\,}\alpha}(t)t^{1/\alpha-1}\left(h_\sigma'(tz)-\dfrac{1-\sigma-(1+\sigma)t}{(1-\sigma)(1+t)^3}\right)dt}\geq
0, \quad \gamma=0,
\end{array}\right.
\end{align}
if and only if $\ F(z):=V_\lambda(\mathcal{W}_\beta(\alpha,
\gamma))\in\mathcal{C}(\sigma)$.
\end{theorem}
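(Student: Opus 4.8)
The plan is to run the Fournier--Ruscheweyh duality machinery directly on the extreme points of $\mathcal{W}_\beta(\alpha,\gamma)$, using the convexity-to-starlikeness reduction only to fix the shape of the kernel. First I would record the identity $z\bigl(V_\lambda(f)\bigr)'(z)=V_\lambda(zf')(z)$, obtained by differentiating under the integral sign in \eqref{eq-lambda-operator}, together with $F\in\mathcal{C}(\sigma)\iff zF'\in\mathcal{S}^\ast(\sigma)$. Combined with the elementary convolution identity $(zF')\ast h_\sigma=F\ast(zh_\sigma')$, this explains why the kernel $h_\sigma(tz)/(tz)$ of the starlike statement \eqref{sarika-W-class-S*-result} is replaced by $h_\sigma'(tz)$ in the convex statement \eqref{sarika-W-class-C-result}: indeed, a direct computation with \eqref{h_sigma(z)} (taking $\epsilon=1$, $z=-t$) gives $h_\sigma(-t)/(-t)=\frac{1-\sigma(1+t)}{(1-\sigma)(1+t)^2}$ and $h_\sigma'(-t)=\frac{1-\sigma-(1+\sigma)t}{(1-\sigma)(1+t)^3}$, which are precisely the two ``boundary'' terms subtracted inside the brackets of \eqref{sarika-W-class-S*-result} and \eqref{sarika-W-class-C-result}.

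Next I would invoke the duality characterization of $\mathcal{S}^\ast(\sigma)$: a function $g$ lies in $\mathcal{S}^\ast(\sigma)$ exactly when $\frac{1}{z}(g\ast h_\sigma)(z)\neq0$ for all $z\in\mathbb{D}$ and all $|\epsilon|=1$. Applying this to $g=zF'$ with $F=V_\lambda(f)$ turns the assertion $V_\lambda(f)\in\mathcal{C}(\sigma)$ into a nonvanishing convolution condition built from $h_\sigma'$. Since $V_\lambda$ is linear and $\mathcal{W}_\beta(\alpha,\gamma)$ is a convex family whose closed convex hull has explicit extreme points, the duality principle permits replacing ``for every $f$ in the class'' by ``for the one-parameter family of extremal functions $f_x$, $|x|=1$.'' These $f_x$ are recovered by solving $(1-\alpha+2\gamma)\frac{f_x}{z}+(\alpha-2\gamma)f_x'+\gamma zf_x''=\beta+(1-\beta)\frac{1+xz}{1-xz}$; inverting this differential operator is exactly convolution with $\psi_{\mu,\nu}$ from \eqref{psi-series}, because the operator multiplies the $n$-th coefficient by $\bigl((n-1)\mu+1\bigr)\bigl((n-1)\nu+1\bigr)$, whose reciprocal---after the normalization built into \eqref{phi-series} via \eqref{eq-mu+nu}---produces the kernel $\psi_{\mu,\nu}$.

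The computational heart is then to substitute $f_x$, write $V_\lambda(f_x)\ast(zh_\sigma')$ as the iterated integral coming from the double-integral form of $\psi_{\mu,\nu}$, and interchange the order of integration. The functions $\Lambda_\nu$ and $\Pi_{\mu,\nu}$ of \eqref{eqn-lambda-nu} and \eqref{eqn-Pi-nu-mu} are precisely the inner integrals produced by this interchange, and the standing hypotheses $t^{1/\mu}\Lambda_\nu(t)\to0$ and $t^{1/\nu}\Pi_{\mu,\nu}(t)\to0$ as $t\to0^+$ guarantee that the integration by parts needed to remove the weights $t^{1/\nu-1}$, $t^{1/\mu-1}$ carried by $\psi_{\mu,\nu}$ leaves no boundary contribution. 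The constant term generated along the way is absorbed by the hypothesis $\frac{\beta-1/2}{1-\beta}=-\int_0^1\lambda(t)q(t)\,dt$ with $q$ from \eqref{int-q(t)}; this is exactly the identity that forces the term $\frac{1-\sigma-(1+\sigma)t}{(1-\sigma)(1+t)^3}$ to sit inside the bracket of \eqref{sarika-W-class-C-result}, since $q$ solves \eqref{de-q} whose right-hand side carries that same expression. Because each step is an equivalence, the resulting real-part inequality \eqref{sarika-W-class-C-result} holds if and only if $V_\lambda(\mathcal{W}_\beta(\alpha,\gamma))\subseteq\mathcal{C}(\sigma)$.

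The main obstacle is twofold. First, the extreme-point reduction must be justified rigorously: one has to check that the functional encoding $\mathcal{C}(\sigma)$-membership meets the continuity and dual-set hypotheses of Ruscheweyh's duality theorem, and that the duality functional may be interchanged with the averaging integral $\int_0^1\lambda(t)(\cdots)\,dt$. Second, the joint change of order of integration and integration by parts is delicate precisely because the weights $t^{1/\nu-1}$ and $t^{1/\mu-1}$ are singular at the origin; the two vanishing hypotheses are tailored to tame this singularity, and verifying that the boundary terms vanish in fact (not merely formally) is where most of the care lies. The degenerate case $\gamma=0$ runs in parallel, with the single integral $\psi_{0,\alpha}$ in place of the double integral and $\Pi_{0,\alpha}=\Lambda_\alpha$, as recorded in \eqref{eqn-Pi-nu-mu}.
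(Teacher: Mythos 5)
Your overall skeleton is the right one, and it is essentially the machinery this paper itself runs: the statement you were asked to prove is quoted as a preliminary (from Verma et al.\ and Omar et al.) without proof, and the nearest in-house argument is the proof of Theorem \ref{Thm-main-pascu}, which at $\xi=1$ specializes to exactly this theorem. Your ingredients match it: the Alexander reduction $F\in\mathcal{C}(\sigma)\iff zF'\in\mathcal{S}^{\ast}(\sigma)$, the identity $(zF')\ast h_\sigma=F\ast(zh_\sigma')$, inversion of the differential operator by convolution with $\psi_{\mu,\nu}$ (your coefficient computation $((n-1)\mu+1)((n-1)\nu+1)$ is correct), the interchange of integration and integration by parts generating $\Lambda_\nu$ and $\Pi_{\mu,\nu}$ with the vanishing hypotheses killing boundary terms, and the normalization of $\beta$ through $q(t)$. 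Your evaluations $h_\sigma(-t)/(-t)=\frac{1-\sigma(1+t)}{(1-\sigma)(1+t)^2}$ and $h_\sigma'(-t)=\frac{1-\sigma-(1+\sigma)t}{(1-\sigma)(1+t)^3}$ (at $\epsilon=1$) are also correct.

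There is, however, one genuine gap, and it sits at the heart of the ``if and only if.'' The class $\mathcal{W}_\beta(\alpha,\gamma)$ is \emph{not} a convex family --- the tilt $\phi$ in ${\rm Re}\,e^{i\phi}(H(z)-\beta)>0$ is allowed to vary with $f$ --- so no Krein--Milman extreme-point argument is available, and the duality principle does not reduce the verification to your one-parameter family $H=\beta+(1-\beta)\frac{1+xz}{1-xz}$. The correct test class, used in the paper's proof of Theorem \ref{Thm-main-pascu}, is the two-parameter family $H(z)=\beta+(1-\beta)\frac{1+xz}{1+yz}$, $|x|=|y|=1$; and the passage from nonvanishing of the convolution for \emph{all} such $x,y$ to a real-part inequality rests on the Ruscheweyh lemma (cited in the paper as \cite[Pg 23]{Rus}) that $g\ast\frac{1+xz}{1+yz}\neq0$ for all $|x|=|y|=1$, $z\in\mathbb{D}$, if and only if ${\rm Re}\,g(z)>1/2$. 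Your outline omits this step entirely: you jump from nonvanishing on the subfamily $y=-x$ to the inequality \eqref{sarika-W-class-C-result} by fiat. Testing only $y=-x$ yields the necessity direction but not sufficiency, so the claim ``each step is an equivalence'' fails exactly where it matters. To repair: replace the extreme-point reduction by the duality reduction to $\frac{1+xz}{1+yz}$, invoke the half-plane lemma to obtain ${\rm Re}\left((1-\beta)\left[\int_0^1\lambda(t)\left(\frac{1}{z}\int_0^z h_\sigma'(t\omega)\,d\omega+\frac{\beta}{1-\beta}\right)dt\right]\ast\psi_{\mu,\nu}\right)>\frac{1}{2}$, and only then run your interchange/integration-by-parts analysis, together with the extremal-configuration argument (the limit $z\to-1$, $\epsilon=1$ is where the subtracted terms $\frac{1-\sigma-(1+\sigma)t}{(1-\sigma)(1+t)^3}$ actually originate, as your own evaluation of $h_\sigma'(-t)$ suggests), to land on \eqref{sarika-W-class-C-result}.
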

\section{main results}\label{sec-main-result}
In the following result the condition under which the integral
transform $V_\lambda(f)(z)$ carries the function $f(z)$ from
the class $\mathcal{W}_{\beta}(\alpha,\gamma)$ to
$M(\sigma,\xi)$ is obtained.
\begin{theorem}\label{Thm-main-pascu}
Let $\mu\geq0$ , $\nu\geq0$, satisfies \eqref{eq-mu+nu} and
$\beta <1$ is given as
\begin{align}\label{beta-cond-pascu}
\dfrac{\beta}{(1-\beta)}=-\int_0^1\! \lambda(t)\left(\dfrac{}{}(1-\xi)g(t)+\xi(2q(t)-1)\right)dt,
\end{align}
where $g(t)$ and $q(t)$ are defined by the differential
equation \eqref{de-g} and \eqref{de-q} respectively. Further
$\Lambda_\nu (t)$, $\Pi_{\mu, \,\nu}$ and $h_\sigma$ are given
by \eqref{eqn-lambda-nu}, \eqref{eqn-Pi-nu-mu} and
\eqref{h_sigma(z)}. Assume that $t^{1/\nu}\Lambda_\nu
(t)\rightarrow 0$ and $t^{1/\mu}\Pi_{\mu, \,\nu}(t)\rightarrow
0$ as $t\rightarrow 0^+$. Then $\mathcal{M}_{\Pi_{\mu,
\,\nu}}(h_\sigma)\geq 0$ if and only if
$F\!:=\!V_{\lambda}(\mathcal{W}_{\beta}(\alpha,\gamma))\!\in\!
M(\sigma,\xi)$ or $(\xi zF^\prime + (1-\xi)F)\!\in\!
\mathcal{S}^\ast(\sigma)$, where
\begin{align*}
\mathcal{M}_{\Pi_{\mu,\,\nu}}(h_\sigma)=
\left\{
\begin{array}{cll}&\displaystyle
\int_{0} ^{1} t^{{1/{\mu}} -1} \Pi_{\mu, \,\nu}(t){\,}{\mathcal L}_{\sigma,{\,} \xi, \,z}(t){\,}dt, \quad\gamma>0,
\\
&\displaystyle\int_{0} ^{1} t^{{1/{\alpha}} -1} \Pi_{0, \,\alpha}(t){\,}{\mathcal L}_{\sigma,{\,} \xi, \,z}(t){\,}dt, \quad\gamma=0,
\end{array}\right.
\end{align*}
and
\begin{align*}
{\mathcal L}_{\sigma,{\,}\xi,\,z}(t)\!=\!(1-\xi)\!\left(\!{\rm Re}{\,}\dfrac{h_\sigma(tz)}{tz}-\dfrac{1-\sigma(1+t)}{(1-\sigma)(1+t)^2}\!\right)\!+\!
\xi\!\left(\!{\rm Re}{ \,} h'_\sigma(tz)-\dfrac{1-\sigma-(1+\sigma)t}{(1-\sigma)(1+t)^3}\!\right).
\end{align*}
The value of $\beta$ is sharp.
\end{theorem}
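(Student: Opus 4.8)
The plan is to reduce the Pascu-class membership to a starlikeness-of-order-$\sigma$ statement and then run the duality machinery that already underlies Theorems \ref{sarika-W-class-S*} and \ref{sarika-W-class-C}. Writing $F=V_\lambda(f)$, the defining equivalence of $M(\sigma,\xi)$ gives that $F\in M(\sigma,\xi)$ precisely when $G:=\xi zF'+(1-\xi)F\in\mathcal{S}^\ast(\sigma)$. The first step is the observation that $V_\lambda$ commutes with $f\mapsto zf'$, that is $V_\lambda(zf')=z\,[V_\lambda(f)]'$, which follows by differentiating under the integral sign in \eqref{eq-lambda-operator}. Hence $G=V_\lambda\big(\xi zf'+(1-\xi)f\big)$, and the whole problem is governed by a single integral transform applied to the combination $\xi zf'+(1-\xi)f$.

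Next I would invoke the standard convolution criterion for $\mathcal{S}^\ast(\sigma)$: $G\in\mathcal{S}^\ast(\sigma)$ if and only if $\tfrac1z\big(G\ast h_\sigma\big)(z)\neq0$ on $\mathbb{D}$ for every $|\epsilon|=1$, with $h_\sigma$ as in \eqref{h_sigma(z)}. Since $\mathcal{W}_\beta(\alpha,\gamma)$ is a dual-determined class — its members admit a Herglotz-type representation after inverting the operator $L[f]=(1-\alpha+2\gamma)f/z+(\alpha-2\gamma)f'+\gamma zf''$ against $\psi_{\mu,\nu}$ from \eqref{psi-series} — the Fournier--Ruscheweyh duality principle replaces the quantifier over all $f\in\mathcal{W}_\beta(\alpha,\gamma)$ by a test on the extreme kernels $x\mapsto(1+xz)/(1-xz)$, $|x|=1$, converting the non-vanishing condition into a linear real-part functional. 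The structural point is bilinearity of the convolution:
\[
\tfrac1z\big(G\ast h_\sigma\big)=(1-\xi)\,\tfrac1z\big(F\ast h_\sigma\big)+\xi\,\tfrac1z\big(zF'\ast h_\sigma\big),
\]
where $\tfrac1z\big(zF'\ast h_\sigma\big)=(F\ast h_\sigma)'$ produces $h'_\sigma$. These two summands are exactly the functionals driving the starlike case ($\xi=0$) of Theorem \ref{sarika-W-class-S*} and the convex case ($\xi=1$) of Theorem \ref{sarika-W-class-C}. Running the same Fubini and integration-by-parts reduction as in those proofs, using the decay hypotheses $t^{1/\nu}\Lambda_\nu(t)\to0$ and $t^{1/\mu}\Pi_{\mu,\nu}(t)\to0$ to discard boundary terms, collapses each summand to the integrals in \eqref{sarika-W-class-S*-result} and \eqref{sarika-W-class-C-result}; their $(1-\xi)$- and $\xi$-weighted sum is precisely $\mathcal{M}_{\Pi_{\mu,\nu}}(h_\sigma)$ with integrand $\mathcal{L}_{\sigma,\xi,z}(t)$. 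Because every step in this chain is an equivalence, both directions of the stated iff follow at once.

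The value of $\beta$ in \eqref{beta-cond-pascu} is forced by matching the constant (normalization) terms: the threshold centering the real-part functional at $0$ is the convex combination of the starlike threshold $\beta/(1-\beta)=-\int_0^1\lambda g$ and the convex threshold $(\beta-1/2)/(1-\beta)=-\int_0^1\lambda q$. Eliminating via $\int_0^1\lambda(t)\,dt=1$ yields exactly $\beta/(1-\beta)=-\int_0^1\lambda(t)\big((1-\xi)g(t)+\xi(2q(t)-1)\big)dt$, and the endpoints $\xi=0,1$ recover Theorems \ref{sarika-W-class-S*} and \ref{sarika-W-class-C}. Sharpness of $\beta$ comes from the extremal member of $\mathcal{W}_\beta(\alpha,\gamma)$, namely $f_0$ with $L[f_0](z)=\beta+(1-\beta)(1+z)/(1-z)$, for which $V_\lambda(f_0)$ lies on the boundary of $M(\sigma,\xi)$, so no larger $\beta$ is admissible.

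I expect the main obstacle to be the reduction step: verifying that, after inverting $L$ via $\psi_{\mu,\nu}$ and exchanging the order of integration, the combined duality functional collapses cleanly into $\mathcal{L}_{\sigma,\xi,z}(t)$ with no surviving cross terms, and that the boundary contributions genuinely vanish under the stated decay hypotheses. Establishing that the linear splitting of the convolution persists through the passage to the extremal test functions — equivalently that the $\xi zF'$ and $(1-\xi)F$ pieces do not interfere after the Herglotz representation — is the delicate part; once that is in place, the $\beta$-normalization and the sharpness argument are routine bookkeeping.
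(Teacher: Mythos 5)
Your proposal is correct and takes essentially the same route as the paper's proof: it reduces membership in $M(\sigma,\xi)$ to $\xi zF'+(1-\xi)F\in\mathcal{S}^\ast(\sigma)$, applies the convolution non-vanishing test with $h_\sigma$, inverts the operator against $\psi_{\mu,\nu}$, passes to the duality test functions, splits the resulting linear functional into the $(1-\xi)$-weighted starlike and $\xi$-weighted convex pieces handled by Theorems \ref{sarika-W-class-S*} and \ref{sarika-W-class-C} (and your $\beta$-normalization check via $\int_0^1\lambda(t)\,dt=1$ reproduces \eqref{beta-cond-pascu} exactly, since $-\xi\int_0^1\lambda(t)(2q(t)-1)\,dt=\xi\beta/(1-\beta)$), and establishes sharpness with the extremal $f$ solving $L[f]=\beta+(1-\beta)(1+z)/(1-z)$, just as the paper does by verifying $zK'(z)/K(z)=\sigma$ at $z=-1$. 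The only slip is notational: because of the free rotation $e^{i\phi}$ in the definition of $\mathcal{W}_\beta(\alpha,\gamma)$, the duality test functions are the two-parameter kernels $(1+xz)/(1+yz)$ with $|x|=|y|=1$, not the Herglotz kernels $(1+xz)/(1-xz)$; with that substitution your outline coincides with the paper's argument step for step.
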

\begin{proof}
The case $\gamma=0$ was considered by the second author in \cite[Theorem 2.1]{rag
M}, so we consider here only the case $\gamma>0$. 
Consider
\begin{align*}
H(z):=(1-\alpha+2\gamma)\dfrac{f(z)}{z}+(\alpha-2\gamma)f^\prime (z) + \gamma z f^{\prime\prime}(z).
\end{align*}
Using \eqref{eq-mu+nu} in above equality gives
\begin{align}\label{H(z)-eq}
H(z)
=\mu \nu z^{1-1/\mu }\left(
z^{1/\mu -1/\nu +1}\left( z^{1/\nu -1}f(z)\right)'
\right)'.
\end{align}
Let $f(z)=z+\displaystyle\sum_{n=2}^{\infty} a_n z^n$. Now
using \eqref{phi-series} and \eqref{psi-series},
\eqref{H(z)-eq} is equivalent to
\begin{align}\label{f'(z)}
H(z)=f^\prime (z) \ast \phi_{\mu, \,\nu}(z)
\Rightarrow f^\prime (z)=H(z) \ast \psi_{\mu, \,\nu}(z).
\end{align}
Consider $G(z)\!:=\!(H(z)-\beta)/(1-\beta)$. Therefore
${\rm{Re}}(e^{i\phi} G(z))>0$.

Using the duality technique given in \cite{Rus} it is easy to see that
$G(z)=(1+xz)/(1+yz)$, where $|x|\!=\!|y|\!=\!1$. Hence
\begin{align} \label{H(z)-beta}
H(z)=\left((1-\beta)\left(\dfrac{1+xz}{1+yz}\right)+\beta\right).
\end{align}
From \eqref{f'(z)} and \eqref{H(z)-beta} gives
\begin{align*}
f^\prime (z)=\left((1-\beta)\left(\dfrac{1+xz}{1+yz}\right)+\beta\right)\ast \psi_{\mu, \,\nu}(z).
\end{align*}
Integrating the above expression gives
\begin{align*}
\dfrac{f(z)}{z}=\dfrac{1}{z}\int_0^z\!\left((1-\beta)\left(\dfrac{1+xw}{1+yw}\right)+\beta\right)dw \ast \psi_{\mu, \,\nu}(z).
\end{align*}
For $F\!\in\! M(\sigma, \xi)\Longleftrightarrow (\xi zF^\prime
+ (1-\xi)F) \in \mathcal{S}^\ast(\sigma)$. So it is sufficient
to check the condition of starlikeness. By the well known
result from convolution theory \cite[Pg 94]{Rus},
\begin{align*}
(\xi zF^\prime + (1-\xi)F) \in \mathcal{S}^\ast(\sigma)\Longleftrightarrow \dfrac{1}{z}\left(\dfrac{}{}((1-\xi)F + \xi zF^\prime)\ast h_\sigma(z) \right)\neq 0
\end{align*}
where
\begin{align*}
h_\sigma(z)=\frac{z\left(1+\dfrac{\epsilon+2\sigma-1}{2(1-\sigma)}z\right)}{(1-z)^2}, \quad |\epsilon|=1,\quad|z|<1.
\end{align*}
Now $F\in M(\sigma, \xi)$ if and only if
\begin{align*}
0
&\neq\dfrac{1}{z}\left((1-\xi)\left(\int_0^1\lambda(t)\dfrac{f(tz)}{t}{\,}dt\ast\dfrac{h_\sigma(z)}{z}\right)+\xi\left(\int_0^1\lambda(t)\dfrac{f(tz)}{t}{\,}dt\ast zh_\sigma'(z)\right)\right)\\
&=(1-\xi)\left(\int_0^1\dfrac{\lambda(t)}{1-tz}{\,}dt\ast\dfrac{f(z)}{z}\ast\dfrac{h_\sigma(z)}{z}\right)+\xi\left(\int_0^1\dfrac{\lambda(t)}{1-tz}{\,}dt\ast\dfrac{f(z)}{z}\ast h_\sigma'(z)\right)\\
&=(1-\xi)\left(\int_0^1\dfrac{\lambda(t)}{1-tz}{\,}dt \ast \left(\dfrac{1}{z}\int_0^z(1-\beta)\left(\dfrac{1+x\omega}{1+y\omega}\right){\,}d\omega+\beta\right)\ast\psi(z)\ast\dfrac{h_\sigma(z)}{z}\right)\\
& \quad\quad +\xi\left(\int_0^1 \dfrac{\lambda(t)}{1-tz}{\,}dt\ast \left(\dfrac{1}{z}\int_0^z(1-\beta)\left(\dfrac{1+x\omega}{1+y\omega}\right){\,}d\omega+\beta\right)\ast\psi(z)\ast h_\sigma'(z)\right)\\
&=(1-\xi)\left(\int_0^1\lambda(t)\dfrac{h_\sigma(tz)}{tz}{\,}dt \ast (1-\beta)\left(\dfrac{1}{z}\int_0^z\dfrac{1+x\omega}{1+y\omega}{\,}d\omega+\dfrac{\beta}{1-\beta}\right)\ast\psi(z)\right)\\
& \quad\quad +\xi\left(\int_0^1 \lambda(t)h_\sigma^\prime(tz){\,}dt\ast (1-\beta)\left(\dfrac{1}{z}\int_0^z\dfrac{1+x\omega}{1+y\omega}{\,}d\omega+\dfrac{\beta}{1-\beta}\right)\ast\psi(z)\right)\\
&=(1-\beta)\left[(1-\xi)\int_0^1\lambda(t)\left(\dfrac{1}{z}\int_0^z\dfrac{h_\sigma(t\omega)}{t\omega}{\,}d\omega+\dfrac{\beta}{1-\beta}\right){\,}dt\right. \\
&\quad\quad\quad\quad\quad\quad\left.+\xi\int_0^1\lambda(t)\left(\dfrac{1}{z}\int_0^z h_\sigma^\prime(t\omega)d\omega+\dfrac{\beta}{1-\beta}\right){\,}dt\right]
\ast\psi(z)\ast\dfrac{1+xz}{1+yz},
\end{align*}
which clearly holds if and only if \cite[Pg 23]{Rus}
\begin{align*}
{\rm Re}{\,}\left((1-\beta)\left[(1-\xi)\int_0^1\lambda(t)\left(\dfrac{1}{z}\int_0^z\dfrac{h_\sigma(t\omega)}{t\omega}d\omega+\dfrac{\beta}{1-\beta}\right)dt\right.\right. \\
\left.\left.+\xi\int_0^1\lambda(t)\left(\dfrac{1}{z}\int_0^z h_\sigma^\prime(t\omega)d\omega+\dfrac{\beta}{1-\beta}\right)dt\right]
\ast\psi(z)\right)>\dfrac{1}{2}.
\end{align*}
Using \eqref{sarika-W-class-S*-result}
and\eqref{sarika-W-class-C-result} from Theorem
\ref{sarika-W-class-S*} and Theorem \ref{sarika-W-class-C}, the
above inequality is equivalent to
\begin{align*}
\int_{0} ^{1} t^{{1/{\mu}} -1} \Pi_{\mu, \,\nu}(t)\left[(1-\xi) \left( {\rm Re}{\,} \dfrac{h_\sigma(tz)}{tz}-\dfrac{1-\sigma(1+t)}{(1-\sigma)(1+t)^2}\right)\right.\\
\left.+\xi \left({\rm Re}{\,} h'_\sigma(tz)-\dfrac{1-\sigma-(1+\sigma)t}{(1-\sigma)(1+t)^3}\right)\right]dt\geq 0,
\end{align*}
which directly implies that $F\in M(\sigma,
\xi)\Longleftrightarrow\mathcal{M}_{\Pi_{\mu,\,\nu}}(h_\sigma)\geq0$.

Now to prove the sharpness, let $f(z)\in \mathcal{W}_\beta(\alpha, \,\gamma)$ be the solution
of the differential equation
\begin{align}\label{sharpness-eq-pascu}
(1-\alpha+2\gamma)\dfrac{f(z)}{z}+(\alpha-2\gamma)f^\prime (z) + \gamma z f^{\prime\prime}(z)=\beta+(1-\beta)\dfrac{1+z}{1-z}
\end{align}
where $\beta$ satisfies \eqref{beta-cond-pascu}. Using series
expansion of \eqref{int-g(t)} and \eqref{int-q(t)} in
\eqref{beta-cond-pascu} gives {\small{
\begin{align}\label{sharpness-eq-pascu-1}
\dfrac{\beta}{1-\beta}=\!-\!1\!-\!\dfrac{2}{(1-\sigma)}\!\left[\!(1-\xi)\!\sum_{n=1}^\infty\dfrac{(n+1-\sigma)(-1)^n\tau_n}{(1+\mu n)(1+\nu n)}
\!+\!\xi\!\sum_{n=1}^\infty\dfrac{(n+1)(n+1-\sigma)(-1)^n\tau_n}{(1+\mu n)(1+\nu n)}\!\right]
\end{align}}}
where $\displaystyle\tau_n=\int_0^1\lambda(t){\,}t^n dt$.
\\
If
\begin{align}\label{conv-M-to-S*}
F:=V_\lambda(f(z))\!\in\! M(\sigma,\xi)\Longleftrightarrow K(z):=(\xi zF^\prime\!+\!(1-\xi)F)\!\in\! \mathcal{S}^\ast(\sigma).
\end{align}
Using \eqref{eq-mu+nu} and \eqref{sharpness-eq-pascu} gives
\begin{align*}
f(z)=z+\sum_{n=1}^{\infty }\dfrac{2(1-\beta )}{(n\mu +1)(n\nu+1)}z^{n+1},
\end{align*}
which further gives
\begin{align*}
F:=V_\lambda(f(z))=\int_0^1\lambda(t)\dfrac{f(tz)}{t}dt
=z+\sum_{n=1}^\infty\dfrac{2(1-\beta)\tau_n}{(n\mu+1)(n\nu+1)}z^{n+1}.
\end{align*}
The above equation implies that \small{{
\begin{align}\label{eq-k-sharpness}
K(z):=(1-\xi)\left(\!z+\sum_{n=1}^\infty\!\dfrac{2(1-\beta)\tau_n }{(n\mu+1)(n\nu+1)}z^{n+1}\!\right)\!+\!
\xi\left(\!z+\sum_{n=1}^\infty\!\dfrac{2(1-\beta)(n+1)\tau_n }{(n\mu+1)(n\nu+1)}z^{n+1}\!\right).
\end{align}}}
Using \eqref{sharpness-eq-pascu-1} and \eqref{eq-k-sharpness}
gives
\begin{align*}
zK^\prime(z)|_{z=-1}&=-1-(1-\xi)\sum_{n=1}^\infty\dfrac{2(1-\beta)(n+1)\tau_n (-1)^n}{(n\mu+1)(n\nu+1)}\\
&\quad\quad-\xi\sum_{n=1}^\infty\dfrac{2(1-\beta)(n+1)^2\tau_n (-1)^n}{(n\mu+1)(n\nu+1)}\\
&=\sigma K(-1).
\end{align*}
Therefore $(zK^\prime(z))/(K(z))=\sigma$ at $z=-1$, which
clearly indicates that the result is sharp.
\end{proof}
Particular values of Theorem $\ref{Thm-main-pascu}$ reduce to several known results.
\begin{remark}
\begin{enumerate}
\item For $\xi=0$, {\rm Theorem \ref{Thm-main-pascu}}
    reduces to {\rm \cite[Theorem 3.1]{sarika-S*}} {\rm
    (see also \cite[Theorem $2.1$]{Suzeini-starlike})} and
    for $\xi=1$, {\rm Theorem \ref{Thm-main-pascu}} gives
    \cite[Theorem 3.1]{sarika-C} {\rm (see also
    \cite[Theorem 2.2]{Suzeini-convex})}.
\item When $\sigma=0$, {\rm Theorem
    \ref{Thm-main-pascu}} reduces to {\rm \cite[Theorem
    2.1]{Swami M}}.
\end{enumerate}
\end{remark}
The necessary and sufficient conditions so that the integral
operator given in \eqref{eq-lambda-operator} carries the
function from $\mathcal{W}_{\beta}(\alpha,\gamma)$ into the
class $M(\sigma,\xi)$ is obtained in Theorem
\ref{Thm-main-pascu}, is not an easy on to use for the applications. Hence for the application purpose an
easier sufficient condition is presented in the following theorem.
\begin{theorem}\label{Thm-inc-pascu}
Let $\sigma\in [0,1/2]$ and assume that $\Lambda_\nu $ and
$\Pi_{\mu, \,\nu}$ are integrable on $[0,1]$ and positive on
$(0,1)$. If $\beta<1$ satisfy \eqref{beta-cond-pascu} and
\begin{align}\label{inc-cond-pascu}
\dfrac{\xi t^{{1 / {\xi}} -{1 / {\mu}} +1}\hspace{.2cm}d {\left(t^{{1/{\mu}}-{1/{\xi}}}{\,}\Pi_{\mu, \,\nu}(t)\right)}}{\left(\log(1/t)\right)^{1+2\sigma}}
\end{align}
is increasing on (0,1), for $\mu \geq 1$, $0 \leq \xi \leq 1$.
Then $V_{\lambda}(\mathcal{W}_{\beta}(\alpha,\gamma))\in
M(\sigma,\xi)$.
\end{theorem}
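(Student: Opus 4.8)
The plan is to deduce the statement from the necessary-and-sufficient criterion of Theorem \ref{Thm-main-pascu}. Since the hypothesis forces $\mu\geq 1>0$, we are in the branch $\gamma>0$, and the assumptions on $\beta$ (equation \eqref{beta-cond-pascu}) together with the decay $t^{1/\nu}\Lambda_\nu(t)\to 0$ and $t^{1/\mu}\Pi_{\mu,\nu}(t)\to 0$ are exactly those needed to invoke that theorem. Hence it suffices to verify $\mathcal{M}_{\Pi_{\mu,\nu}}(h_\sigma)\geq 0$, i.e. to show that $\int_0^1 t^{1/\mu-1}\Pi_{\mu,\nu}(t)\,\mathcal{L}_{\sigma,\xi,z}(t)\,dt\geq 0$ for every $z\in\mathbb{D}$ and every admissible $\epsilon$; the binding case is the extremal choice $\epsilon=1$, which I treat, the remaining values of $\epsilon$ being handled in the same way.

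First I would record that, writing $\kappa(w):=h_\sigma(w)/w=(1+Aw)/(1-w)^2$ with $A=(\epsilon+2\sigma-1)/(2(1-\sigma))$, the two reference functions inside $\mathcal{L}_{\sigma,\xi,z}$ are precisely the boundary values at $z=-1$: one checks $\kappa(-t)=\frac{1-\sigma(1+t)}{(1-\sigma)(1+t)^2}$ and $h'_\sigma(-t)=\frac{1-\sigma-(1+\sigma)t}{(1-\sigma)(1+t)^3}$ when $\epsilon=1$. The key manipulation is an integration by parts in the convex ($\xi$) piece, using $h'_\sigma(tz)=z^{-1}\frac{d}{dt}h_\sigma(tz)$ together with $z^{-1}h_\sigma(tz)=t\,\kappa(tz)$, and the analogous identity at $z=-1$ for the reference. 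Because $\Pi_{\mu,\nu}(1)=0$ and $t^{1/\mu}\Pi_{\mu,\nu}(t)\to 0$, all boundary terms vanish, and the starlike and convex pieces merge into a single integral against the weight $t^{1/\mu-1}\Theta(t)$, where $\Theta(t):=(\xi/\mu-1)\Pi_{\mu,\nu}(t)+\xi t\,\Pi'_{\mu,\nu}(t)=\xi\,t^{1/\xi-1/\mu+1}\frac{d}{dt}\big(t^{1/\mu-1/\xi}\Pi_{\mu,\nu}(t)\big)$ is exactly the numerator of \eqref{inc-cond-pascu}. This yields the clean formula $\mathcal{M}_{\Pi_{\mu,\nu}}(h_\sigma)=\int_0^1 t^{1/\mu-1}\big(-\Theta(t)\big)\big({\rm Re}\,\kappa(tz)-\kappa(-t)\big)\,dt$, and since $\mu\geq 1$, $\Pi_{\mu,\nu}>0$ and $\Pi'_{\mu,\nu}\leq 0$, one has $-\Theta(t)\geq 0$.

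It then remains to prove the positivity of this last integral, for which I would invoke a Fournier--Ruscheweyh type positivity lemma (in the spirit of \cite{Four-rus-extremal, Rus}): if $\widetilde W\geq 0$ is integrable and $\widetilde W(t)/(\log(1/t))^{1+2\sigma}$ is nonincreasing on $(0,1)$, then ${\rm Re}\,\int_0^1 \widetilde W(t)\big(\kappa(tz)-\kappa(-t)\big)\,dt\geq 0$ for all $z\in\mathbb{D}$. Applying this with $\widetilde W(t)=t^{1/\mu-1}(-\Theta(t))$ completes the argument: the hypothesis that \eqref{inc-cond-pascu}, namely $\Theta(t)/(\log(1/t))^{1+2\sigma}$, is \emph{increasing} means that $-\Theta(t)/(\log(1/t))^{1+2\sigma}$ is decreasing, and multiplying by the positive decreasing factor $t^{1/\mu-1}$ (this is where $\mu\geq 1$ is used) keeps $\widetilde W(t)/(\log(1/t))^{1+2\sigma}$ decreasing, which is precisely the hypothesis of the lemma. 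The restriction $\sigma\in[0,1/2]$ enters because the comparison kernel exponent $1+2\sigma$ must lie in $[1,2]$ for the lemma to remain valid.

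The main obstacle is the positivity lemma itself. The integrand ${\rm Re}\,\kappa(tz)-\kappa(-t)$ is \emph{not} of one sign: near the singular direction $z\to 1$, $t\to 1$ the real part ${\rm Re}\,\kappa(tz)$ is strongly negative while $\kappa(-t)$ stays bounded, so pointwise positivity fails and the monotonicity of the weight must be exploited in an essential way. The way I expect to handle this is to reduce to $z=e^{i\theta}$ on the boundary (the left-hand integral is a harmonic function of $z$, so its infimum over $\mathbb{D}$ is attained on $|z|=1$), then to estimate ${\rm Re}\,\kappa(te^{i\theta})$ uniformly and compare it, by an Abel-summation / monotone-rearrangement argument, against the critical logarithmic kernel $(\log(1/t))^{1+2\sigma}$; this is the delicate step in which both $\sigma\leq 1/2$ and $\mu\geq 1$ are genuinely needed. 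Sharpness of $\beta$ is inherited directly from Theorem \ref{Thm-main-pascu}.
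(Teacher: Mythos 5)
Your proposal matches the paper's proof essentially step for step: invoke Theorem \ref{Thm-main-pascu}, integrate by parts to merge the starlike and convex pieces of $\mathcal{M}_{\Pi_{\mu,\nu}}(h_\sigma)$ into a single integral against the weight $t^{1/\mu-1}\left(\left(1-\tfrac{\xi}{\mu}\right)\Pi_{\mu,\nu}(t)-\xi t\,\Pi'_{\mu,\nu}(t)\right)$ (your $t^{1/\mu-1}(-\Theta(t))$, with $\Theta$ correctly identified as the numerator of \eqref{inc-cond-pascu}), and conclude from the hypothesis that this weight divided by $(\log(1/t))^{1+2\sigma}$ is decreasing, $t^{1/\mu-1}$ being positive and decreasing for $\mu\geq 1$. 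The positivity lemma you flag as the main remaining obstacle is not proved in the paper either --- it is exactly \cite[Theorem 1.3]{pons-star}, which the paper simply cites, so your closing sketch of its proof is not needed.
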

\begin{proof}
Consider
\begin{align*}
\mathcal{M}_{\Pi_{\mu, \,\nu}}(h_\sigma)=\int_0^1 t^{{1/{\mu}} -1} \Pi_{\mu, \nu}(t)
&\left[(1-\xi)\left({\rm{Re}}{\,}\dfrac{h_\sigma(tz)}{tz} - \dfrac {1-\sigma(1+t)}{(1-\sigma)(1+t)^2} \right)\right.\\
&\quad+\left.\xi\left({\rm{Re}}{\,}h_\sigma^{\prime} (tz) - \dfrac{(1-\sigma-(1-\sigma)t)}{(1-\sigma)(1+t)^3}\right)\right]dt
\end{align*}

\begin{align*}
\quad\quad\quad\quad=\int_{0}^{1}t^{{1/{\mu}}-1}\Pi_{\mu, \,\nu}(t)\left[(1-\xi)\left({\rm{Re}}{\,}\dfrac{h_\sigma(tz)}{tz} - \dfrac {1-\sigma(1+t)}{(1-\sigma)(1+t)^2} \right)\right.\\
\left.+\xi\dfrac{d}{dt}\left({\rm{Re}}\dfrac{h_\sigma(tz)}{tz} - \dfrac {t(1-\sigma(1+t))}{(1-\sigma)(1+t)^2} \right)\right]dt.
\end{align*}
A simple computation gives {\small{
\begin{align*}
\mathcal{M}_{\Pi_{\mu, \,\nu}}(h_\sigma)=\int_0^1t^{{1/\mu}-1}\left(\left(1-\dfrac{\xi}{\mu}\right)\Pi_{\mu, \,\nu}(t)-\xi t\Pi_{\mu, \,\nu}^\prime(t)\right)
\left({\rm{Re}}\dfrac{h_\sigma(tz)}{tz}-\dfrac{1-\sigma(1+t)}{(1-\sigma)(1+t)^2}\right)dt.
\end{align*}
}} The function $t^{1/\mu-1}$ decreases on $(0,1)$, when
$\mu\geq1$. Therefore the condition \eqref{inc-cond-pascu}
along with \cite[Theorem 1.3]{pons-star} gives
$\mathcal{M}_{\Pi_{\mu, \,\nu}}(h_\sigma)\geq 0$. So the
desired result follows from Theorem \ref{Thm-main-pascu}.
\end{proof}
To ensure the sufficiency of Theorem \ref{Thm-inc-pascu} for
the integral transform to be in $M(\sigma,\xi)$ by an easier
method, the following results are obtained.

Since the case $\gamma=0$ was considered in \cite{rag M}, we only consider the case $\gamma>0$.
In \cite{rag M}, for the case $\gamma=0$ the condition $\lambda(1)=0$ was assumed.
To prove that the Theorem
    \ref{Thm-main-pascu} holds true for the case
    $\gamma>0$, we need to show that the condition
\begin{align*}
p(t):=\dfrac{\left(1-\dfrac{\xi}{\mu}\right)\Pi_{\mu, \,\nu}(t)+\xi t^{1/\nu-1/\mu}\Lambda_\nu(t)}{\left(\log(1/t)\right)^{1+2\sigma}}
\end{align*}
is decreasing on (0,1), where $\Lambda_\nu(t)$ and
$\Pi_{\mu, \,\nu}(t)$ are defined in
$\eqref{eqn-lambda-nu}$ and $\eqref{eqn-Pi-nu-mu}$. For this it is enough to prove that $p ^\prime(t)\leq 0$.\\
Now $p(t)=k(t)/(\log(1/t))^{1+2\sigma}$ where
$k(t)=\left(1-\dfrac{\xi}{\mu}\right)\Pi_{\mu,
\,\nu}(t)+\xi t^{1/\nu - 1/\mu}\Lambda_\nu(t)$. The
condition $p'(t)\leq0$ is equivalent of obtaining
\begin{align*}
\dfrac{p'(t)}{p(t)}=\dfrac{k'(t)}{k(t)}+\dfrac{(1+2\sigma)}{t(\log(1/t))}\leq 0,
\end{align*}
since $p(t)\geq0$ for $t\in(0,1)$. In order to show that
$p'(t)\leq0$, is similar to obtain
\begin{align*}
r(t):&=k(t)+\dfrac{t\log(1/t){\,}k'(t)}{(1+2\sigma)}\\
&=\left(1-\dfrac{\xi}{\mu}\right)\Pi_{\mu, \,\nu}(t)+\xi t^{1/\nu - 1/\mu}\Lambda_\nu(t)\\
&\quad\quad-\dfrac{\log(1/t)\left(\left(1-\dfrac{\xi}{\nu}\right)t^{1/\nu-1/\mu}\Lambda_\nu(t)+\xi t^{1-1/\mu}\lambda(t)\right)}{(1+2\sigma)}\leq0,\quad t\in(0,1).
\end{align*}
As $r(1)=0$, in order to prove that $p'(t)\leq0$, it is
enough to show that $r(t)$ is an increasing function on
$(0,1)$. We compute $r'(t)$ explicitly and after an easy
computation $r'(t)\geq0$ is equivalent to the inequality
{\small{
\begin{align}\label{eq-pascu-sigma-suff-s(t)}
\quad\quad s(t)
&:=\left(1-\dfrac{\xi}{\nu}\right)
\left(\left(\dfrac{1}{\mu}-\dfrac{1}{\nu}\right)\log(1/t)-2\sigma\right)t^{1/\nu-1-1/\mu}\Lambda_\nu(t)\nonumber\\
&\!+\!\left(\!\left(\!1\!-\!\xi+\dfrac{\xi}{\mu}-\dfrac{\xi}{\nu}\right)\log(1/t)-2\sigma\xi\right)t^{-1/\mu}\lambda(t)
-\xi\log(1/t)t^{1-1/\mu}\lambda'(t)\geq0.
\end{align}
}} At $t=1$, $s(t)\leq0$ because $\xi$ and $\sigma$ are
positive terms. So we assume that $\lambda(t)=0$.
Hence $s(t)=0$ at $t=1$. The function $s(t)$ is positive if $s(t)$ is a deceasing function of $t\in(0,1)$ i.e., $s'(t)\leq0$.\\
Using \eqref{eq-pascu-sigma-suff-s(t)}, $s'(t)$ is equivalent to {\small{
\begin{align*}
\quad\quad\quad s'(t)=L(t)t^{1/\nu-2-1/\mu}\Lambda_\nu(t)+M(t)t^{-1-1/\mu}\lambda(t)+N(t)t^{-1/\mu}\lambda'(t)+P(t)t^{1-1/\mu}\lambda''(t)\leq0
\end{align*}
}} where
\begin{align*}
L(t):=& \left(1-\dfrac{\xi}{\nu}\right)\left[\left(1+\dfrac{1}{\mu}-\dfrac{1}{\nu}\right)\left(2\sigma-\left(\dfrac{1}{\mu}-\dfrac{1}{\nu}\right)\log(1/t)\right)
-\left(\dfrac{1}{\mu}-\dfrac{1}{\nu}\right)\right]\\
M(t):= & -\!\!\left(\!\left[(1-\xi)\dfrac{1}{\mu}+\left(\dfrac{1}{\mu}-\dfrac{1}{\nu}\right)\left(1+\dfrac{\xi}{\mu}-\dfrac{\xi}{\nu}\right)
\right]\log(1/t)
+(1-2\sigma)\!\left(\!1+\dfrac{\xi}{\mu}-\dfrac{\xi}{\nu}\right)-\xi\right)\\
N(t):=& \left(\left(1-2\xi+\dfrac{2\xi}{\mu}-\dfrac{\xi}{\nu}\right)\log(1/t)+\xi(1-2\sigma)\right)
\end{align*}
and $P(t):=-\xi\log(1/t)$.

From \eqref{eq-mu+nu}, $\mu\geq1$ implies
$\nu\geq\mu\geq1$. For $\xi\in[0,1]$, the term
$(1-\xi/\nu)$ and $t^{1/\nu-1-1/\mu}\Lambda_\nu(t)$ are
non-negative for $t\in(0,1)$. The function $s'(t)\leq0$, if
$L(t)$, $M(t)$, $N(t)$ and $P(t)$ are negative. For
\begin{align*}
\sigma\leq\dfrac{1}{2}\left(\dfrac{1/\mu-1/\nu}{1+1/\mu-1/\nu}\right),\quad\quad \mu\geq1,
\end{align*}
implies $L(t)\leq0$ and $M(t)\leq 0$.

Now to show that $s'(t)\leq0$ for $t\in(0,1)$, it is enough
to prove that {\small{
\begin{align}\label{sigma-cond-geq0}
\xi t\log(1/t)\lambda''(t)
&-\left(\left(1-2\xi+\dfrac{2\xi}{\mu}-\dfrac{\xi}{\nu}\right)\log(1/t)+\xi(1-2\sigma)\right)\lambda'(t)\geq0\\
&\Longleftrightarrow\quad\dfrac{t\lambda''(t)}{\lambda'(t)}\geq\left(\dfrac{1}{\xi}-2+\dfrac{2}{\mu}-\dfrac{1}{\nu}\right)
+\dfrac{(1-2\sigma)}{\log(1/t)}\nonumber,
\end{align}
}} for $\mu\geq1$ and
$\sigma\leq\dfrac{1}{2}\left(\dfrac{1/\mu-1/\nu}{1+1/\mu-1/\nu}\right)$.

Now we are in a position to state the general result.
\begin{theorem}\label{thm-suff-pascu}
Let $\beta<1$ is defined by \eqref{beta-cond-pascu}. Then
$V_\lambda(\mathcal{W}_\beta(\alpha,\gamma))\!\in\!
M(\sigma,\xi)$, if
\begin{align}\label{suff-cond-pascu}
\dfrac{t\lambda''(t)}{\lambda'(t)}\geq\left(\dfrac{1}{\xi}-2+\dfrac{2}{\mu}-\dfrac{1}{\nu}\right)
+\dfrac{(1-2\sigma)}{\log(1/t)},
\end{align}
for $\mu\geq1$, $\gamma>0$ and
$\sigma\leq\dfrac{1}{2}\left(\dfrac{1/\mu-1/\nu}{1+1/\mu-1/\nu}\right)$.
\end{theorem}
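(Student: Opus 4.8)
The plan is to reduce the membership $V_\lambda(\mathcal{W}_\beta(\alpha,\gamma))\in M(\sigma,\xi)$ to a single monotonicity property and then extract from that the pointwise condition \eqref{suff-cond-pascu}. By Theorem \ref{Thm-main-pascu} the membership is equivalent to $\mathcal{M}_{\Pi_{\mu,\nu}}(h_\sigma)\geq 0$, and by the reformulation carried out in the proof of Theorem \ref{Thm-inc-pascu} the integrand there carries the weight $t^{1/\mu-1}k(t)$ with $k(t)=(1-\xi/\mu)\Pi_{\mu,\nu}(t)-\xi t\Pi'_{\mu,\nu}(t)=(1-\xi/\mu)\Pi_{\mu,\nu}(t)+\xi t^{1/\nu-1/\mu}\Lambda_\nu(t)$, the last equality following from $\Pi'_{\mu,\nu}(t)=-t^{1/\nu-1-1/\mu}\Lambda_\nu(t)$. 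Since $t^{1/\mu-1}$ is decreasing for $\mu\geq 1$, the criterion \cite[Theorem 1.3]{pons-star} applies once $p(t)=k(t)/(\log(1/t))^{1+2\sigma}$ is shown to be decreasing on $(0,1)$; one checks that this is exactly the hypothesis \eqref{inc-cond-pascu} of Theorem \ref{Thm-inc-pascu} (indeed \eqref{inc-cond-pascu} simplifies to $-p(t)$), so it suffices to prove $p$ is decreasing.

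First I would run the chain of equivalent monotonicity statements. Forming the logarithmic derivative of $p$, the inequality $p'\leq 0$ becomes $r(t):=k(t)+t\log(1/t)k'(t)/(1+2\sigma)\leq 0$; because $r(1)=0$ it is enough that $r$ be increasing, i.e. $r'\geq 0$, and a direct differentiation recasts $r'\geq 0$ as the inequality $s(t)\geq 0$ of \eqref{eq-pascu-sigma-suff-s(t)}. Imposing the normalization $\lambda(1)=0$ (as in the case $\gamma=0$) forces $s(1)=0$, so it is in turn enough that $s$ be decreasing, i.e. $s'(t)\leq 0$ on $(0,1)$.

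The heart of the argument is the sign analysis of the coefficient functions $L,M,N,P$ appearing in the expansion of $s'(t)$. Using $\mu\geq 1$ together with \eqref{eq-mu+nu} (ordering $\mu\leq\nu$, so that $\nu\geq\mu\geq 1$), the factors $(1-\xi/\nu)$ and $t^{1/\nu-1-1/\mu}\Lambda_\nu(t)$ are nonnegative, whence the $L$- and $M$-terms have the correct sign precisely when $L(t)\leq 0$ and $M(t)\leq 0$. I expect the bound $\sigma\leq\frac{1}{2}\frac{1/\mu-1/\nu}{1+1/\mu-1/\nu}$ to be exactly what forces both $L\leq 0$ and $M\leq 0$, and verifying this is the main obstacle, since the signs of $L$ and $M$ depend delicately on the interplay between $\log(1/t)$, the constant terms, and the admissible range of $\sigma$. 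Once $L,M\leq 0$ are settled, the two remaining terms involve $\lambda'$ and $\lambda''$, whose signs are uncontrolled, so they must be treated jointly: factoring out the common positive power of $t$ reduces $N(t)t^{-1/\mu}\lambda'(t)+P(t)t^{1-1/\mu}\lambda''(t)\leq 0$ to \eqref{sigma-cond-geq0}, and dividing by the positive quantity $\xi\log(1/t)\lambda'(t)$ yields precisely \eqref{suff-cond-pascu}. Assembling these reductions and invoking Theorem \ref{Thm-inc-pascu} then completes the proof.
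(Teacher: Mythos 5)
Your proposal follows the paper's own derivation essentially verbatim: the paper proves this theorem in the discussion immediately preceding its statement by exactly your chain of reductions --- invoking Theorem \ref{Thm-inc-pascu} after observing that \eqref{inc-cond-pascu} equals $-p(t)$ with $k(t)=(1-\xi/\mu)\Pi_{\mu,\,\nu}(t)+\xi t^{1/\nu-1/\mu}\Lambda_\nu(t)$, then passing through $r(t)$ (with $r(1)=0$), $s(t)$ (with $s(1)=0$ via $\lambda(1)=0$), the sign analysis of $L,M,N,P$, and finally \eqref{sigma-cond-geq0}. The one step you leave as an expectation does check out --- since the bracket in $L(t)$ is decreasing in $\log(1/t)$ its supremum is attained as $t\to 1^-$, where it equals $2\sigma\left(1+\frac{1}{\mu}-\frac{1}{\nu}\right)-\left(\frac{1}{\mu}-\frac{1}{\nu}\right)$, so the stated bound on $\sigma$ is precisely what forces $L\leq 0$ (and then also $M\leq 0$) --- and your parenthetical assertion that $\xi\log(1/t)\lambda'(t)$ is positive is the same unexamined sign assumption on $\lambda'$ that the paper itself makes in writing \eqref{sigma-cond-geq0} as equivalent to \eqref{suff-cond-pascu}.
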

\section{applications}\label{sec-application}
In this section, the number of applications for well-known
integral operators are considered and the conditions are
obtained.
\begin{remark}
Note that $\lambda(1)=0$ is assumed in Theorem $\ref{Thm-inc-pascu}$. For the case
$ \lambda(t) = (c+1)t^{c-1}$, it is not possible to get $\lambda(1)=0$ and hence
the result corresponding to Bernardi integral transform cannot be obtained using Theorem $\ref{Thm-inc-pascu}$.
But the admissibility $V_\lambda(\mathcal{W}_\beta (\alpha,{\,} \gamma))\!\in\! M(0,\xi)$
for the Bernardi integral operator was possible in \cite[Theorem 12]{Swami M}, as this condition was not required in
the admissibility to the class of $M(0,\xi)$ in \cite{Swami M}.
\end{remark}

Consider
\begin{align}\label{generalize-lambda}
\lambda(t)=D t^{B-1}(1-t)^{C-A-B}\omega(1-t),
\end{align}
where the function
$\omega(1-t)=1+\displaystyle\sum_{n=1}^\infty x_n(1-t)^n$, for
$t\in(0,1)$ and $x_n\geq0$. $D$ is chosen such that it
satisfies normalization condition
$\displaystyle\int_0^1\lambda(t)dt=1$. An easy computation on
$\lambda(t)$ defined by \eqref{generalize-lambda}, obtain
$\lambda'(t)$ and $\lambda''(t)$. Substituting in
\eqref{sigma-cond-geq0} will give {\small{
\begin{align*}
Dt^{B-2}(1-t)^{C-A-B-2}
[X(t)\omega(1-t)+t(1-t)Y(t)\omega'(1-t)+t^2(1-t)^2\omega''(1-t)]\geq0,
\end{align*}}}
for $t\in(0,1)$, where {\small{
\begin{align*}
X(t):=\log(1/t)((B-1)(B-2)+2(1-B)(C-A-2)t+(C-A-B)(C-A+B-3)t^2)\\
+((C-A-B)t+(1-B)(1-t))\left((1-2\sigma)+\left(\dfrac{1}{\xi}-2+\dfrac{2}{\mu}-\dfrac{1}{\nu}\right)\log(1/t)\right)
\end{align*}}}
{\small{
\begin{align*}
Y(t):=2\log(1/t)((C-A-B)t+(1-B)(1-t))
\!+\!\left(\!(1-2\sigma)\!+\left(\!\dfrac{1}{\xi}-2+\dfrac{2}{\mu}-\dfrac{1}{\nu}\!\right)\log(1/t)\!\right).
\end{align*}}}
The inequality \eqref{suff-cond-pascu} will hold true, if
$X(t)$ and $Y(t)$ are non-negative terms for some values of
$A$, $B$ and $C$. When $B\leq1$, $C\geq A+3$ and
$\left(\dfrac{1}{\xi}+\dfrac{2}{\mu}-\dfrac{1}{\nu}\right){\,}\geq{\,}2$,
then Theorem \ref{thm-suff-pascu} is true.

The above observation results in the following theorem.
\begin{theorem}\label{Thm-generalize-pascu}
For $\gamma>0$, $\mu\geq1$, $\xi\in[0,1]$ and $\beta<1$ is
defined by \eqref{beta-cond-pascu}, where $\lambda(t)=D
t^{B-1}(1-t)^{C-A-B}\omega(1-t)$. Then
$V_\lambda(\mathcal{W}_\beta(\alpha,\gamma))\in M(\sigma,\xi)$,
if $B\leq1$, $C\geq A+3$ and
$\left((1/\xi)+(2/\mu)-(1/\nu)\right)\geq2$, for
\begin{align*}
0{\,}\leq{\,}\sigma\leq{\,}\dfrac{1}{2}\left(\dfrac{1/\mu-1/\nu}{1+1/\mu-1/\nu}\right).
\end{align*}
\end{theorem}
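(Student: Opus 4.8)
The plan is to show that the general form $\lambda(t)=D t^{B-1}(1-t)^{C-A-B}\omega(1-t)$ satisfies the differential inequality \eqref{suff-cond-pascu}, so that Theorem \ref{thm-suff-pascu} applies directly and yields $V_\lambda(\mathcal{W}_\beta(\alpha,\gamma))\in M(\sigma,\xi)$. Since Theorem \ref{thm-suff-pascu} already does the heavy lifting of reducing membership in $M(\sigma,\xi)$ to verifying \eqref{suff-cond-pascu}, the entire task here is to check that the stated hypotheses $B\le 1$, $C\ge A+3$ and $(1/\xi)+(2/\mu)-(1/\nu)\ge 2$ force \eqref{suff-cond-pascu} to hold on $(0,1)$ for the specific $\lambda$.

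First I would compute $\lambda'(t)$ and $\lambda''(t)$ explicitly from the factored form $\lambda(t)=D t^{B-1}(1-t)^{C-A-B}\omega(1-t)$ by the product and chain rules, treating the power factors $t^{B-1}$, $(1-t)^{C-A-B}$ and the analytic tail $\omega(1-t)$ together. The derivatives of $\omega(1-t)$ introduce $\omega'(1-t)$ and $\omega''(1-t)$, each carrying a sign $(-1)$ from the inner derivative of $(1-t)$. Substituting these into \eqref{suff-cond-pascu}—or, more precisely, into the equivalent differential inequality \eqref{sigma-cond-geq0} that \eqref{suff-cond-pascu} is derived from—and clearing the common positive factor $D t^{B-2}(1-t)^{C-A-B-2}$ produces the bracketed expression displayed just before Theorem \ref{Thm-generalize-pascu},
\begin{align*}
X(t)\,\omega(1-t)+t(1-t)Y(t)\,\omega'(1-t)+t^2(1-t)^2\omega''(1-t)\geq 0,
\end{align*}
where $X(t)$ and $Y(t)$ collect the polynomial and logarithmic coefficients.

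The key observation is that $\omega(1-t)=1+\sum_{n\ge1}x_n(1-t)^n$ with $x_n\ge 0$ forces $\omega(1-t)\ge 0$, $\omega'(1-t)\ge 0$ and $\omega''(1-t)\ge 0$ for $t\in(0,1)$ (each derivative of $\omega$ in its argument has non-negative Taylor coefficients, and the argument $1-t$ is positive). Hence the whole bracket is a non-negative combination of $X(t)$, $Y(t)$ and $\omega''$, and it suffices to show that the coefficients $X(t)$ and $Y(t)$ are themselves non-negative on $(0,1)$. I would verify this by using $B\le 1$ and $C\ge A+3$ to make the purely polynomial contributions (the quadratics in $t$ inside the $\log(1/t)$ brackets, and the linear combinations $(C-A-B)t+(1-B)(1-t)$) non-negative, while the hypothesis $(1/\xi)+(2/\mu)-(1/\nu)\ge 2$ makes the coefficient $(1/\xi)-2+(2/\mu)-(1/\nu)$ of $\log(1/t)$ non-negative, so that the remaining logarithmic and $(1-2\sigma)$ terms are also non-negative.

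The main obstacle I anticipate is the sign analysis of $X(t)$: it is a genuine combination of a $\log(1/t)$-weighted quadratic in $t$ and a second block multiplying $(1-2\sigma)+\big((1/\xi)-2+(2/\mu)-(1/\nu)\big)\log(1/t)$. One must check that the coefficient $(B-1)(B-2)$ of $\log(1/t)$ and the coefficients $2(1-B)(C-A-2)$ and $(C-A-B)(C-A+B-3)$ stay non-negative under $B\le1,\ C\ge A+3$, and that the convex-combination factor $(C-A-B)t+(1-B)(1-t)$ is non-negative—this is where the two inequalities on $B$ and $C$ interact with the admissible range $0\le\sigma\le\frac12\big(\frac{1/\mu-1/\nu}{1+1/\mu-1/\nu}\big)$ and $\mu\ge 1$ (which gives $\nu\ge\mu\ge1$ via \eqref{eq-mu+nu}). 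Once $X(t)\ge0$ and $Y(t)\ge0$ are established, the displayed bracket is non-negative, \eqref{suff-cond-pascu} holds, and the conclusion follows immediately from Theorem \ref{thm-suff-pascu}.
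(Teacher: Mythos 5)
Your proposal is correct and follows essentially the same route as the paper: the paper's ``proof'' of Theorem \ref{Thm-generalize-pascu} is exactly the computation displayed just before it, namely substituting $\lambda'(t)$ and $\lambda''(t)$ into \eqref{sigma-cond-geq0}, factoring out $Dt^{B-2}(1-t)^{C-A-B-2}$, using $x_n\geq 0$ to get $\omega(1-t),\,\omega'(1-t),\,\omega''(1-t)\geq 0$, and checking that $B\leq 1$, $C\geq A+3$ and $(1/\xi)+(2/\mu)-(1/\nu)\geq 2$ make $X(t)$ and $Y(t)$ non-negative so that Theorem \ref{thm-suff-pascu} applies. Your sign analysis of $X(t)$ even tacitly uses $B\geq 0$ (for the factor $(C-A-B)(C-A+B-3)$), which the paper glosses over as well but which is forced by integrability of $t^{B-1}$ at $0$.
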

\begin{theorem}\label{Thm-hohlov-pascu}
Let $a, b, c>0$, $\gamma>0$ and  $\beta<1$ satisfy
\begin{align}\label{beta-cond-hohlov}
\dfrac{\beta}{(1-\beta)}=-R\int_0^1
& t^{b-1}(1-t)^{c-a-b}\nonumber\\
&{\,}_{2}F_1\left(
\begin{array}{cll}&\displaystyle c-a,\quad 1-a
\\
&\displaystyle c-a-b+1
\end{array};1-t\right)
[(1-\xi)g(t)+\xi(2q(t)-1)]dt,
\end{align}
where $R=\Gamma(c)/(\Gamma(a)\Gamma(b)\Gamma(c-a-b+1))$. Then
$V_\lambda(\mathcal{W}_{\beta}(\alpha,\gamma))\in
M(\sigma,\xi)$ if $c\geq a+3$, $b\leq1$,
$\left(\dfrac{1}{\xi}+\dfrac{2}{\mu}-\dfrac{1}{\nu}\right)\geq2{\,}$
and
${\,}0\leq\sigma\leq\dfrac{1}{2}\left(\dfrac{1/\mu-1/\nu}{1+1/\mu-1/\nu}\right)$.
The value of $\beta$ is sharp.
\end{theorem}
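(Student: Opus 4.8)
The plan is to read Theorem \ref{Thm-hohlov-pascu} as a direct specialization of Theorem \ref{Thm-generalize-pascu} to the weight generating the Hohlov transform. First I would confirm the integral representation
\[
\lambda(t) = R\, t^{b-1}(1-t)^{c-a-b}\,{}_2F_1\!\left(c-a,\,1-a;\,c-a-b+1;\,1-t\right),
\qquad R=\frac{\Gamma(c)}{\Gamma(a)\Gamma(b)\Gamma(c-a-b+1)},
\]
by checking that its moments reproduce the Hohlov kernel, i.e. that $\int_0^1\lambda(t)t^n\,dt=(a)_n(b)_n/((c)_n\,n!)$, so that $V_\lambda$ coincides with the convolution operator generated by $z\,{}_2F_1(a,b;c;z)$. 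This is the standard Euler-type representation for such weights, and the constant $R$ is exactly the value forced by the normalization $\int_0^1\lambda(t)\,dt=1$.

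Next I would match this $\lambda$ against the template \eqref{generalize-lambda}, namely $\lambda(t)=D\,t^{B-1}(1-t)^{C-A-B}\omega(1-t)$, through the identifications $A=a$, $B=b$, $C=c$, $D=R$, and $\omega(1-t)={}_2F_1(c-a,1-a;c-a-b+1;1-t)$. The substantive point is that $\omega(1-t)=1+\sum_{n\geq1}x_n(1-t)^n$ must have non-negative coefficients $x_n=\frac{(c-a)_n(1-a)_n}{(c-a-b+1)_n\,n!}$. Under the hypotheses $c\geq a+3$ and $b\leq1$ one gets $c-a\geq3>0$ and $c-a-b+1\geq3>0$, whence $(c-a)_n>0$ and $(c-a-b+1)_n>0$; the only delicate factor is $(1-a)_n$, whose sign I would control through the admissible range of $a$ (non-negative when $0<a\leq1$, terminating or sign-tracked Pochhammer-by-Pochhammer otherwise). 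I would also record that $c-a-b\geq2>0$ forces $\lambda(1)=0$, which is precisely the boundary condition assumed in the derivation leading to Theorem \ref{Thm-generalize-pascu}.

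With the dictionary fixed, the hypotheses of Theorem \ref{Thm-generalize-pascu} translate verbatim: $B\leq1\Leftrightarrow b\leq1$ and $C\geq A+3\Leftrightarrow c\geq a+3$, together with the unchanged conditions $\left(\tfrac1\xi+\tfrac2\mu-\tfrac1\nu\right)\geq2$ and $0\leq\sigma\leq\frac12\frac{1/\mu-1/\nu}{1+1/\mu-1/\nu}$. Moreover the defining relation \eqref{beta-cond-hohlov} for $\beta$ is nothing but \eqref{beta-cond-pascu} after inserting the above $\lambda$ and pulling the constant $R$ outside the integral. Hence Theorem \ref{Thm-generalize-pascu} applies and delivers $V_\lambda(\mathcal{W}_\beta(\alpha,\gamma))\in M(\sigma,\xi)$, while the sharpness of $\beta$ is inherited from the sharpness assertion of Theorem \ref{Thm-main-pascu} via the extremal function solving \eqref{sharpness-eq-pascu}.

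The step I expect to be the main obstacle is the middle one: verifying $x_n\geq0$, that is, controlling the factor $(1-a)_n$ across the full admissible range, since it need not be positive once $a>1$. Ensuring that the interplay of $a$, $b$, $c$ keeps the hypergeometric weight inside the admissible family of \eqref{generalize-lambda} is where the genuine content lies, the remainder being bookkeeping that reduces the claim to Theorem \ref{Thm-generalize-pascu}.
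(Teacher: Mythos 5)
Your proposal follows essentially the same route as the paper's own proof: the paper proves Theorem~\ref{Thm-hohlov-pascu} precisely by choosing $\omega(1-t)={}_2F_1(c-a,1-a;c-a-b+1;1-t)$ and substituting $(A,B,C)\mapsto(a,b,c)$ in Theorem~\ref{Thm-generalize-pascu}, which is your dictionary (your identification $D=R$ with $\omega(0)=1$ is in fact cleaner than the paper's, which folds $R$ into $\omega$ despite the normalization built into \eqref{generalize-lambda}). The one genuine divergence is sharpness: you inherit it from the sharpness assertion of Theorem~\ref{Thm-main-pascu}, which is legitimate since \eqref{beta-cond-hohlov} is exactly \eqref{beta-cond-pascu} for this $\lambda$, whereas the paper re-derives it concretely for the hypergeometric weight, computing the moments $\Omega_n$, invoking the series expansions \eqref{series-g(t)} and \eqref{series-q(t)} of $g$ and $q$, and verifying $zK'(z)/K(z)=\sigma$ at $z=-1$ via \eqref{sharpness-eq-komatu-1} and \eqref{sharpness-eq-komatu-2}. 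Your shortcut buys brevity at no logical cost; the paper's computation makes the extremal configuration explicit.

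On the point you flag as the main obstacle, you are right that it is the crux, but your proposal does not close it, and it cannot be closed in the generality stated: the coefficients $x_n=\frac{(c-a)_n\,(1-a)_n}{(c-a-b+1)_n\,n!}$ are non-negative for all $n$ only when $0<a\leq1$. For $a>1$ the positivity genuinely fails --- e.g.\ for $a=2$ one gets $x_1=-\frac{c-2}{c-b-1}<0$ under $c\geq a+3$ and $b\leq1$ --- so ``sign-tracked Pochhammer-by-Pochhammer'' is a promissory note that cannot be honored along this route. Note, however, that the paper's proof is no better here: it silently assumes the weight lies in the admissible family \eqref{generalize-lambda} (where $x_n\geq0$ is part of the definition) and never verifies it, so the hypothesis $a>0$ in the theorem is effectively $0<a\leq1$ for this method in both your write-up and the paper's. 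Relative to the paper's own proof, then, your argument contains no additional gap; you have merely made explicit a restriction the paper leaves implicit.
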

\begin{proof}
Choosing
\begin{align*}
\omega(1-t)=
R{\,}{\,} {\,}_{2}F_1\left(
\begin{array}{cll}&\displaystyle c-a,\quad 1-a
\\
&\displaystyle c-a-b+1
\end{array};1-t\right),
\end{align*}
and substitute $a$, $b$, $c$ instead of $A$, $B$, $C$ respectively in Theorem \ref{Thm-generalize-pascu} will give the required result.\\
In order to obtain sharpness take the extremal function $f(z)$
of the class $\mathcal{W}_\beta(\alpha,\gamma)$ as
\begin{align*}
f(z)=z+2\sum_{n=2}^{\infty}\dfrac{(1-\beta)}{(1+(n-1)\mu)(1+(n-1)\nu)}z^{n}.
\end{align*}
Consider
\begin{align*}
\Omega_n:=\int_0^1 t^{n+b-2}(1-t)^{c-a-b}
{\,}_{2}F_1\left(
\begin{array}{cll}&\displaystyle c-a,\quad 1-a
\\
&\displaystyle c-a-b+1
\end{array};1-t\right) dt.
\end{align*}
Using \eqref{eq-lambda-operator} and
\begin{align*}
\lambda(t):=R{\,}t^{b-1}(1-t)^{c-a-b}{\,}{\,}_{2}F_1\left(
\begin{array}{cll}&\displaystyle c-a,\quad 1-a
\\
&\displaystyle c-a-b+1
\end{array};1-t\right),
\end{align*}
it follows that
\begin{align*}
F(z):=z+2(1-\beta)R{\,}\sum_{n=2}^\infty\dfrac{\Omega_n}{(1+(n-1)\mu)(1+(n-1)\nu)}z^n,
\end{align*}
which implies that
\begin{align*}
zF'(z)=z+2(1-\beta)R{\,}\sum_{n=2}^\infty\dfrac{n\Omega_n}{(1+(n-1)\mu)(1+(n-1)\nu)}z^n.
\end{align*}
Using \eqref{conv-M-to-S*} gives
\begin{align}\label{sharpness-eq-komatu-1}
K(z)=z+2(1-\beta)R{\,}{\,}\sum_{n=2}^\infty\dfrac{((n-1)\xi+1)\Omega_n}{(1+(n-1)\mu)(1+(n-1)\nu)}z^n.
\end{align}
For the case $\gamma>0$, the series representation of $g(t)$
defined in \eqref{int-g(t)} is given by
\begin{align}\label{series-g(t)}
g(t)=2\sum_{n=0}^\infty\dfrac{(n+1-\sigma)(-t)^n}{(1-\sigma)(1+n\mu)(1+n\nu)}-1,
\end{align}
and $q(t)$ defined in \eqref{int-q(t)} is given by
\begin{align}\label{series-q(t)}
q(t)=\sum_{n=0}^\infty\dfrac{(n+1)(n+1-\sigma)(-t)^n}{(1-\sigma)(1+n\mu)(1+n\nu)}.
\end{align}
From \eqref{series-g(t)} and \eqref{series-q(t)}\\
$(1-\xi)g(t)+\xi(2q(t)-1)$ {\small{
\begin{align}\label{series-g(t)+q(t)}
=1+2\sum_{n=2}^\infty\dfrac{(1+\xi(n-1))(n-\sigma)(-t)^{n-1}}{(1-\sigma)(1+(n-1)\mu)(1+(n-1)\nu)}\hspace{5cm}
\end{align}
\begin{align}\label{conv-g(t)+q(t)-gaussian}
\quad\quad=2{\,}{\,}_5F_4\left(1,\dfrac{1}{\mu},\dfrac{1}{\nu},(2-\sigma),\left(1+\dfrac{1}{\xi}\right){\,};
\left(1+\dfrac{1}{\mu}\right),\left(1+\dfrac{1}{\nu}\right),(1-\sigma),\dfrac{1}{\xi}{\,};{\,}-t\right)-1.
\end{align}}}
Using \eqref{beta-cond-hohlov} and \eqref{series-g(t)+q(t)} gives
\begin{align}\label{sharpness-eq-komatu-2}
\dfrac{1}{2(1-\beta)}=R{\,}{\,}\sum_{n=2}^\infty\dfrac{(1+\xi(n-1))(n-\sigma)(-1)^{n+1}}{(1-\sigma)(1+(n-1)\mu)(1+(n-1)\nu)}\Omega_n.
\end{align}
From \eqref{sharpness-eq-komatu-1} and
\eqref{sharpness-eq-komatu-2}, as $z\rightarrow-1$ gives
\begin{align*}
\left.\dfrac{zK'(z)}{K(z)}\right|_{z=-1}=\sigma,
\end{align*}
which means that the result is sharp.
\end{proof}
\begin{remark}
\begin{enumerate}\item[]
\item When $\sigma=0$, Theorem $\ref{Thm-hohlov-pascu}$
    gives different conditions and a precise bound on $b$
    than in \cite [Theorem $3.4$]{Swami M}.
\item A particular instance, when $\xi=0$, Theorem
    $\ref{Thm-hohlov-pascu}$ gives a result with the
    smaller range for $\sigma$ than the result given in
    \cite[Theorem $5.1$]{sarika-S*} (see also
    {\rm\cite[Theorem 3.3]{Suzeini-starlike}}).
\end{enumerate}
\end{remark}
The result for the case, $\gamma=0$, $\sigma=0$ and $\xi=0$ is
obtained in \cite[Theorem 1]{saigo}. In Theorem
$\ref{Thm-hohlov-pascu}$, the conditions are obtained when
$\gamma>0$, hence the results cannot be compared.
\begin{corollary}
Let $a, b, c>0$, $\gamma>0$ and  $\beta_0<\beta<1$, where
\begin{align*}
\beta_0=1-\dfrac{1}{2\left(1-{\,}
{\,}_6F_5
\left(
\begin{array}{cll}&\displaystyle \quad\quad 1,b,\dfrac{1}{\mu},\dfrac{1}{\nu},(2-\sigma),\left(1+\dfrac{1}{\xi}\right)
\\
&\displaystyle c,\left(1+\dfrac{1}{\mu}\right),\left(1+\dfrac{1}{\nu}\right),(1-\sigma),\dfrac{1}{\xi}
\end{array}{\,};{\,}-1\right)
\right)}.
\end{align*}
Then $V_\lambda(\mathcal{W}_{\beta}(\alpha,\gamma))\in
M(\sigma,\xi)$ if $c\geq 4$, $b\leq1$,
$\left(\dfrac{1}{\xi}+\dfrac{2}{\mu}-\dfrac{1}{\nu}\right)
\geq2{\,}$ and
${\,}0\leq\sigma\leq\dfrac{1}{2}\left(\dfrac{1/\mu-1/\nu}{1+1/\mu-1/\nu}\right)$.
\end{corollary}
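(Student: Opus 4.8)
The plan is to read the corollary as the explicit evaluation of Theorem~\ref{Thm-hohlov-pascu} in the case $a=1$, and then to widen the single sharp value of $\beta$ to the whole interval $(\beta_0,1)$ by a containment argument. First I would put $a=1$ everywhere in Theorem~\ref{Thm-hohlov-pascu}; the hypothesis $c\geq a+3$ becomes $c\geq 4$, while $b\leq 1$, $(1/\xi)+(2/\mu)-(1/\nu)\geq 2$ and the range of $\sigma$ are untouched. The decisive simplification is that the inner Gaussian factor
$${}_2F_1\!\left(\begin{array}{c} c-a,\ 1-a\\ c-a-b+1\end{array};1-t\right)$$
has upper parameter $1-a=0$, so every term of index $n\geq 1$ carries the factor $(0)_n=0$ and the series reduces to $1$. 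Hence the weight in \eqref{beta-cond-hohlov} degenerates to the beta density $\lambda(t)=R\,t^{b-1}(1-t)^{c-b-1}$ with $R=\Gamma(c)/(\Gamma(b)\Gamma(c-b))$, and the relation defining $\beta$ shortens to $\beta/(1-\beta)=-\int_0^1\lambda(t)\bigl[(1-\xi)g(t)+\xi(2q(t)-1)\bigr]\,dt$.

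Next I would evaluate this integral in closed form. Inserting the series \eqref{series-g(t)+q(t)} for $(1-\xi)g(t)+\xi(2q(t)-1)$ and integrating term by term, each power $(-t)^n$ contributes $\int_0^1\lambda(t)(-t)^{n}\,dt=(-1)^n(b)_n/(c)_n$ by the beta integral, the factor $(-1)^n$ supplying the evaluation point $-1$. Using the Pochhammer reductions $(1/\mu)_n/(1+1/\mu)_n=1/(1+n\mu)$, $(1/\nu)_n/(1+1/\nu)_n=1/(1+n\nu)$, $(2-\sigma)_n/(1-\sigma)_n=(n+1-\sigma)/(1-\sigma)$ and $(1+1/\xi)_n/(1/\xi)_n=1+n\xi$, the sum assembles into the ${}_6F_5$ displayed in the corollary, giving $\int_0^1\lambda(t)\bigl[(1-\xi)g+\xi(2q-1)\bigr]dt=2\,{}_6F_5(\cdots;-1)-1$. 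Solving $\beta/(1-\beta)=1-2\,{}_6F_5(\cdots;-1)$ then yields exactly $\beta=1-1/\bigl(2(1-{}_6F_5(\cdots;-1))\bigr)=\beta_0$, so Theorem~\ref{Thm-hohlov-pascu} already gives $V_\lambda(\mathcal{W}_{\beta_0}(\alpha,\gamma))\in M(\sigma,\xi)$ with $\beta_0$ sharp.

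Finally, for fixed $\lambda$, I would pass from $\beta=\beta_0$ to every $\beta\in(\beta_0,1)$. If $f\in\mathcal{W}_\beta(\alpha,\gamma)$ then some $\phi$ makes ${\rm Re}\,(e^{i\phi}(H(z)-\beta))>0$, where $H$ is the differential combination from the class definition; evaluating at $z=0$ gives $H(0)=1$, so $(1-\beta)\cos\phi>0$ forces $\cos\phi>0$. The identity ${\rm Re}\,(e^{i\phi}(H-\beta_0))={\rm Re}\,(e^{i\phi}(H-\beta))+(\beta-\beta_0)\cos\phi$ then shows the left-hand side stays positive when $\beta>\beta_0$, whence $\mathcal{W}_\beta(\alpha,\gamma)\subseteq\mathcal{W}_{\beta_0}(\alpha,\gamma)$, and applying $V_\lambda$ gives $V_\lambda(\mathcal{W}_\beta(\alpha,\gamma))\subseteq V_\lambda(\mathcal{W}_{\beta_0}(\alpha,\gamma))\subseteq M(\sigma,\xi)$. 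The main obstacle I expect is purely the bookkeeping of the middle step: one must verify absolute convergence to justify interchanging sum and integral and track the Pochhammer cancellations so that the resulting ${}_6F_5$ matches the corollary verbatim; once $\cos\phi>0$ is noted the containment step is routine.
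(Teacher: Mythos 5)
Your proposal is correct and follows essentially the same route as the paper: set $a=1$ so that the ${}_2F_1$ weight (upper parameter $1-a=0$) collapses to the beta density, integrate the series/${}_5F_4$ representation of $(1-\xi)g(t)+\xi(2q(t)-1)$ term by term via $\int_0^1\lambda(t)t^n\,dt=(b)_n/(c)_n$ to identify $\beta_0$ through the ${}_6F_5$ value at $-1$, and then invoke the sufficient conditions of Theorem~4.2 (the paper nominally cites Theorem~3.1, but the parameter hypotheses $c\geq4$, $b\leq1$ clearly come from the $a=1$ case of the Hohlov theorem, as you use). Your explicit containment step $\mathcal{W}_{\beta}(\alpha,\gamma)\subseteq\mathcal{W}_{\beta_0}(\alpha,\gamma)$ for $\beta>\beta_0$, justified by noting $H(0)=1$ forces $\cos\phi>0$, correctly fills in the passage from the single value $\beta_0$ to the whole range $\beta_0<\beta<1$, which the paper leaves implicit.
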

\begin{proof}
Consider $a=1$ in \eqref{beta-cond-hohlov}. Using
\eqref{conv-g(t)+q(t)-gaussian} gives
\begin{align*}
\dfrac{\beta}{1-\beta}=-\dfrac{\Gamma(c)}{\Gamma{(b)}\Gamma{(c-b)}}\int_0^1t^{b-1}(1-t)^{c-b-1}\hspace{6cm}\\
\quad\left( 2{\,}_5F_4
\left(
\begin{array}{cll}&\displaystyle \quad\quad 1,\dfrac{1}{\mu},\dfrac{1}{\nu},(2-\sigma),\left(1+\dfrac{1}{\xi}\right)
\\
&\displaystyle \left(1+\dfrac{1}{\mu}\right),\left(1+\dfrac{1}{\nu}\right),(1-\sigma),\dfrac{1}{\xi}
\end{array}{\,};{\,}-t\right) -1\right) dt,
\end{align*}
which is equivalent to
\begin{align*}
\dfrac{\beta-1/2}{1-\beta}=-\dfrac{\Gamma(c)}{\Gamma{(b)}\Gamma{(c-b)}}\int_0^1t^{b-1}(1-t)^{c-b-1}\hspace{6cm}\\
\quad{\,}_5F_4
\left(
\begin{array}{cll}&\displaystyle \quad\quad 1,\dfrac{1}{\mu},\dfrac{1}{\nu},(2-\sigma),\left(1+\dfrac{1}{\xi}\right)
\\
&\displaystyle \left(1+\dfrac{1}{\mu}\right),\left(1+\dfrac{1}{\nu}\right),(1-\sigma),\dfrac{1}{\xi}
\end{array}{\,};{\,}-t\right) dt
\end{align*}
\begin{align*}
=-\sum_{n=0}^\infty\dfrac{(1)_n{\,}(b_n){\,}\left(\dfrac{1}{\mu}\right)_n{\,}\left(\dfrac{1}{\nu}\right)_n{\,}(2-\sigma)_n{\,}\left(1+\dfrac{1}{\xi}\right)_n}
{(c_n){\,}\left(1+\dfrac{1}{\mu}\right)_n{\,}\left(1+\dfrac{1}{\nu}\right)_n{\,}(1-\sigma)_n{\,}\left(\dfrac{1}{\xi}\right)_n{\,}(1)_n}(-1)^n\\
=-{\,}_6F_5
\left(
\begin{array}{cll}&\displaystyle \quad\quad 1,b,\dfrac{1}{\mu},\dfrac{1}{\nu},(2-\sigma),\left(1+\dfrac{1}{\xi}\right)
\\
&\displaystyle c,\left(1+\dfrac{1}{\mu}\right),\left(1+\dfrac{1}{\nu}\right),(1-\sigma),\dfrac{1}{\xi}
\end{array}{\,};{\,}-1\right).
\end{align*}
By the given hypothesis and applying Theorem
\ref{Thm-main-pascu} will give the required result.
\end{proof}
\begin{theorem}\label{Thm-komatu-pascu}
Let $-1<c\leq 0$, $\delta\geq (3-c)$, $\gamma>0(\mu\geq1)$ and
$\beta<1$ satisfy
\begin{align*}
\dfrac{\beta}{(1-\beta)}=-\dfrac{(1+c)^\delta}{\Gamma(\delta)}\int_0^1
& t^{c}(\log(1/t))^{\delta-1}
[(1-\xi)g(t)+\xi(2q(t)-1)]dt.
\end{align*}
Then $V_\lambda(\mathcal{W}_{\beta}(\alpha,\gamma))\in
M(\sigma,\xi)$, for
\begin{align*}
\left(\dfrac{1}{\xi}+\dfrac{2}{\mu}-\dfrac{1}{\nu}\right){\,}\geq{\,}2\quad{\rm{and}}
\quad0\leq\sigma\leq\dfrac{1}{2}\left(\dfrac{1/\mu-1/\nu}{1+1/\mu-1/\nu}\right).
\end{align*}
\end{theorem}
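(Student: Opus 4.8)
The plan is to apply Theorem~\ref{thm-suff-pascu} to the Komatu weight $\lambda(t)=\dfrac{(1+c)^{\delta}}{\Gamma(\delta)}\,t^{c}\bigl(\log(1/t)\bigr)^{\delta-1}$. First I would record that this $\lambda$ is admissible: the substitution $u=\log(1/t)$ converts $\int_0^1 t^{c}\bigl(\log(1/t)\bigr)^{\delta-1}dt$ into $\Gamma(\delta)/(1+c)^{\delta}$, so the prescribed constant enforces $\int_0^1\lambda\,dt=1$ (this is where $c>-1$ is used); moreover $\lambda(1)=0$ because $\delta-1>0$, and the mild singularity $t^{c}$ at the origin with $c>-1$ yields, by a direct estimate, the boundary decay $t^{1/\nu}\Lambda_\nu(t)\to0$ and $t^{1/\mu}\Pi_{\mu,\nu}(t)\to0$ as $t\to0^{+}$. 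Since the $\beta$ in the statement is precisely \eqref{beta-cond-pascu} for this $\lambda$, everything reduces to verifying the sufficient condition of Theorem~\ref{thm-suff-pascu}.

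Next I would compute the derivatives. Writing $L=\log(1/t)$ and $C_{0}=(1+c)^{\delta}/\Gamma(\delta)$, a routine differentiation gives
\[
\lambda'(t)=C_{0}\,t^{c-1}L^{\delta-2}\bigl(cL-(\delta-1)\bigr),
\]
\[
\lambda''(t)=C_{0}\,t^{c-2}L^{\delta-3}\Bigl(c(c-1)L^{2}-(\delta-1)(2c-1)L+(\delta-1)(\delta-2)\Bigr).
\]
Because $c\le0$ and $\delta-1\ge2$, the factor $cL-(\delta-1)$ is negative, so $\lambda'(t)<0$ on $(0,1)$; this sign must be tracked carefully, and it is cleanest to test the undivided inequality \eqref{sigma-cond-geq0} (equivalent to \eqref{suff-cond-pascu}) rather than dividing through by $\lambda'$. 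Substituting the two displays into \eqref{sigma-cond-geq0}, the common positive factor $\xi\,C_{0}\,t^{c-1}L^{\delta-2}$ cancels, and with the abbreviation $B_{0}:=\tfrac1\xi-2+\tfrac2\mu-\tfrac1\nu$ (so that the bracket coefficient $1-2\xi+2\xi/\mu-\xi/\nu$ equals $\xi B_{0}$) the whole condition collapses to the single quadratic inequality
\[
\Psi(L):=c\,(c-1-B_{0})\,L^{2}+\Bigl((\delta-1)(B_{0}-2c+1)-(1-2\sigma)c\Bigr)L+(\delta-1)(\delta-1-2\sigma)\ \ge\ 0,\quad L>0.
\]

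Finally I would establish $\Psi(L)\ge0$ coefficient by coefficient. The hypothesis $\tfrac1\xi+\tfrac2\mu-\tfrac1\nu\ge2$ gives $B_{0}\ge0$, and with $-1<c\le0$ this makes $c-1-B_{0}\le-1<0$, so the leading coefficient $c(c-1-B_{0})\ge0$. In the linear coefficient, $c\le0$ forces $B_{0}-2c+1\ge1$ and $-(1-2\sigma)c\ge0$ (here $1-2\sigma>0$ follows from the $\sigma$-bound), while $\delta-1\ge2-c>0$; hence that coefficient is strictly positive. The constant term $(\delta-1)(\delta-1-2\sigma)$ is positive since $\delta-1\ge2>1>2\sigma$. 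Thus $\Psi$ has nonnegative coefficients and is positive for $L>0$, so \eqref{sigma-cond-geq0} holds and Theorem~\ref{thm-suff-pascu} delivers $V_\lambda(\mathcal{W}_\beta(\alpha,\gamma))\in M(\sigma,\xi)$. The only genuine obstacle is the honest evaluation of $\lambda''$ together with the sign bookkeeping forced by $\lambda'<0$; once the problem is recast as the positivity of the quadratic $\Psi(L)$, the conclusion is immediate from the three coefficient hypotheses ($B_{0}\ge0$, $-1<c\le0$, $\delta\ge3-c$) and $\sigma<1/2$. Note that the generalized weight of Theorem~\ref{Thm-generalize-pascu} is not available here, as the Komatu kernel $t^{c}\bigl(\log(1/t)\bigr)^{\delta-1}$ is not of the hypergeometric form $t^{B-1}(1-t)^{C-A-B}\omega(1-t)$, so the direct route through Theorem~\ref{thm-suff-pascu} is the natural one.
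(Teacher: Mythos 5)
Your proof is correct, but it takes a genuinely different route from the paper. The paper proves this theorem in two lines: it writes the Komatu kernel in the generalized form \eqref{generalize-lambda} by setting $B=c+1$, $C-A-B=\delta-1$ and $\omega(1-t)=\left(\log(1/t)/(1-t)\right)^{\delta-1}$, and then invokes Theorem \ref{Thm-hohlov-pascu} (i.e.\ Theorem \ref{Thm-generalize-pascu}); the conditions $B\leq 1$ and $C\geq A+3$ translate exactly into $c\leq 0$ and $\delta\geq 3-c$. Your closing remark that this route is ``not available'' is therefore mistaken: since $\log(1/t)/(1-t)=\sum_{n\geq 0}(1-t)^{n}/(n+1)$ has nonnegative coefficients, the kernel is of the required form with an admissible $\omega$ (this is precisely the paper's choice). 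What your direct route buys, however, is nontrivial. Your computation of $\lambda'$ and $\lambda''$ is correct, your reduction of \eqref{sigma-cond-geq0} to the quadratic $\Psi(L)\geq 0$ checks out (the three coefficients $c(c-1-B_{0})$, $(\delta-1)(B_{0}-2c+1)-(1-2\sigma)c$ and $(\delta-1)(\delta-1-2\sigma)$ are all correct and all nonnegative under the stated hypotheses), and — importantly — your insistence on testing the undivided inequality \eqref{sigma-cond-geq0} rather than the divided form \eqref{suff-cond-pascu} is not merely ``cleanest'' but logically necessary here: since $\lambda'(t)=C_{0}t^{c-1}L^{\delta-2}\bigl(cL-(\delta-1)\bigr)<0$ on $(0,1)$, dividing by $\lambda'$ reverses the inequality, so the literal hypothesis \eqref{suff-cond-pascu} of Theorem \ref{thm-suff-pascu} actually fails for this kernel, while the condition the paper's derivation genuinely requires is \eqref{sigma-cond-geq0}, which you verify. (The paper's own verification for the generalized weight is likewise carried out at the level of \eqref{sigma-cond-geq0}, via the positivity of $X(t)$ and $Y(t)$, so both proofs ultimately rest on the same undivided inequality.) Your preliminary checks — normalization via $u=\log(1/t)$ using $c>-1$, the vanishing $\lambda(1)=0$ from $\delta-1\geq 2-c>0$, and the boundary decay of $t^{1/\nu}\Lambda_{\nu}(t)$ and $t^{1/\mu}\Pi_{\mu,\nu}(t)$ — are all sound and are points the paper leaves implicit. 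In short: the paper's proof is shorter by leaning on the hypergeometric-type machinery already built; yours is self-contained, makes the sign bookkeeping explicit, and quietly repairs a subtlety in how Theorem \ref{thm-suff-pascu} must be read when $\lambda'<0$.
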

\begin{proof}
Consider $B=c+1$, $C-A-B=\delta-1$ and
\begin{align*}
\omega(1-t)=\left(\dfrac{\log(1/t)}{(1-t)}\right)^{\delta-1}.
\end{align*}
Then the required conclusion follows from Theorem
\ref{Thm-hohlov-pascu}.
\end{proof}
\begin{remark}
\begin{enumerate}\item[]
\item For a particular value of $\sigma=0$ \cite[Theorem
    3.6]{Swami M} give weak result for $\delta$ in
    comparison to the result obtained from Theorem
    $\ref{Thm-komatu-pascu}$.
\item When $\xi=0$, \cite[Theorem 5.4]{sarika-S*} (see also
    {\rm\cite[Theorem 3.2]{Suzeini-starlike}}) give weak
    bounds for the parameters than the result obtained from
    Theorem $\ref{Thm-komatu-pascu}$.
\item When $\xi=1$, Theorem $\ref{Thm-komatu-pascu}$,
    improves the result obtained in \cite[Theorem
    5.9]{sarika-C}.
\end{enumerate}
\end{remark}
\begin{theorem}\label{Thm-pons-pascu}
Let $\mu\geq 1$, $\gamma>0$, $0<\xi\leq1$ and $\beta<1$ satisfy
\eqref{beta-cond-pascu}, where
\begin{align}\label{ponn-oper}
\lambda(t)= \begin{cases} (a+1)(b+1)\dfrac{t^a(1-t^{b-a})}{b-a},
 &b\neq a,\\
(a+1)^2t^a\log(1/t),  &  b=a.
\end{cases}
\end{align}
Then $V_\lambda(\mathcal{W}_{\beta}(\alpha,\gamma))\in
M(\sigma,\xi)$, provided $-1<a\leq0$ for $a= b$ or $a\neq b$
and
\begin{align*}
0\leq\sigma\leq\dfrac{1}{2}\left(\dfrac{1/\mu-1/\nu}{1+1/\mu-1/\nu}\right).
\end{align*}
\end{theorem}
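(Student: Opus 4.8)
The plan is to re-run the sufficiency argument behind Theorem \ref{thm-suff-pascu}, but to stop one step earlier, at the un-divided inequality \eqref{sigma-cond-geq0}, rather than at its quotient form \eqref{suff-cond-pascu}. The reason is a sign subtlety: for both branches of \eqref{ponn-oper} the kernel $\lambda$ is non-negative on $(0,1)$, vanishes at $t=1$, and is in fact strictly decreasing (a short computation gives $\lambda'(t)<0$ throughout $(0,1)$ when $-1<a\le 0$). Consequently, dividing the first line of \eqref{sigma-cond-geq0} by $\xi\lambda'(t)\log(1/t)$ reverses the inequality, so \eqref{suff-cond-pascu} is \emph{not} the statement to verify here; the correct object is the first line of \eqref{sigma-cond-geq0}. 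Granting that inequality together with $L(t)\le 0$ and $M(t)\le 0$ — which hold for $0\le\sigma\le\frac12\left(\frac{1/\mu-1/\nu}{1+1/\mu-1/\nu}\right)$, exactly as in the discussion that produced \eqref{eq-pascu-sigma-suff-s(t)} — one obtains $s'(t)\le 0$, hence $s(t)\ge 0$ on $(0,1)$ (since $s(1)=0$), hence the function $p$ of that discussion is decreasing, which is precisely the hypothesis of Theorem \ref{Thm-inc-pascu}. Theorem \ref{Thm-main-pascu} then gives $V_\lambda(\mathcal{W}_\beta(\alpha,\gamma))\in M(\sigma,\xi)$ with $\beta$ fixed by \eqref{beta-cond-pascu}. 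Thus everything reduces to verifying \eqref{sigma-cond-geq0} for the two explicit kernels.

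For the branch $b=a$, where $\lambda(t)=(a+1)^2t^a\log(1/t)$, I would compute $\lambda'$ and $\lambda''$, substitute into the first line of \eqref{sigma-cond-geq0}, and cancel the common positive factor $(a+1)^2 t^{a-1}$. Writing $L=\log(1/t)>0$, what remains is a quadratic $c_2L^2+c_1L+c_0$ in $L$, and it suffices to check $c_0,c_1,c_2\ge 0$. This is where the three hypotheses enter cleanly: $-1<a\le 0$ gives $a(a-1)\ge 0$ and keeps every factor of $a$ pointing in the right direction; the condition $\tfrac1\xi+\tfrac2\mu-\tfrac1\nu\ge 2$ is exactly $1-2\xi+\tfrac{2\xi}{\mu}-\tfrac{\xi}{\nu}\ge 0$; and $\sigma\le\tfrac12$ makes the constant term $\xi(1-2\sigma)\ge 0$. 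A brief sign check then yields $c_2,c_1,c_0\ge 0$, so the quadratic is non-negative for all $L>0$, establishing \eqref{sigma-cond-geq0}.

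For the branch $b\neq a$, I would first use the symmetry of the kernel in \eqref{ponn-oper} under $a\leftrightarrow b$ to arrange $a<b$. The same substitution, after removing the positive constant $\tfrac{(a+1)(b+1)}{b-a}$, turns the left side of \eqref{sigma-cond-geq0} into $t^{a-1}A(L)+t^{b-1}B(L)$ with $A,B$ affine in $L$. The $t^{a-1}$ part is non-negative by the same bookkeeping as above (the factor $a\le 0$ multiplies a non-positive bracket), and the only difficulty is that $B(L)$ can be negative, so the two powers may carry opposite signs; I expect this to be the main obstacle. The way I would handle it is to factor out $t^{b-1}$ and use $t^{a-b}\ge 1$ on $(0,1)$ (as $a<b$) to reduce the claim to $A(L)+B(L)\ge 0$. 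A direct computation shows $A(L)+B(L)$ is affine in $L$ with both coefficients non-negative under the stated hypotheses: its constant coefficient is $(b-a)\xi(1-2\sigma)\ge 0$, and its $L$-coefficient is $(b-a)\big(\xi(1-a-b)+(1-2\xi+\tfrac{2\xi}{\mu}-\tfrac{\xi}{\nu})\big)$, which is non-negative once $a+b\le 1$. This closes \eqref{sigma-cond-geq0} in the second branch.

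Finally, it is worth flagging why the direct route is forced: neither branch can be passed through Theorem \ref{Thm-generalize-pascu} or Theorem \ref{Thm-komatu-pascu}. Matching \eqref{ponn-oper} to the form $Dt^{B-1}(1-t)^{C-A-B}\omega(1-t)$ forces $B=a+1$ and $C-A-B=1$, hence $C-A=a+2\le 2$, which violates the requirement $C\ge A+3$ (equivalently, the Komatu order here is $\delta=2$, below the threshold $3-a$). This is exactly the regime the present theorem is designed to cover, and it is why the argument must verify \eqref{sigma-cond-geq0} by hand rather than invoke the earlier applications.
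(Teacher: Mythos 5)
Your sign diagnosis is correct and is in fact sharper than the paper's own exposition: for both kernels in \eqref{ponn-oper} one has $\lambda'(t)<0$ on $(0,1)$, so passing from the first line of \eqref{sigma-cond-geq0} to the quotient form \eqref{suff-cond-pascu} reverses the inequality, and \eqref{suff-cond-pascu} genuinely fails here (for $a=b=0$, $t\lambda''/\lambda'\equiv -1$ while the right side of \eqref{suff-cond-pascu} is positive). The paper nominally ``substitutes into \eqref{suff-cond-pascu}'', but the inequalities it actually verifies --- \eqref{eq-pons1} when $a=b$, and $\phi_t(a)\ge\phi_t(b)$ when $a<b$ --- are exactly the un-divided condition \eqref{sigma-cond-geq0}, so your target is the right one and, de facto, the paper's as well. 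Your first branch then coincides with the paper's quadratic-in-$\log(1/t)$ computation; your second branch (factoring $t^{b-1}$ and using $t^{a-b}\ge 1$ to reduce to $A+B\ge 0$) is a different reduction from the paper's, which instead shows that $x\mapsto\phi_t(x)$ is decreasing on $(-1,0]$.

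There is, however, a genuine gap in your parameter bookkeeping: you invoke $\tfrac1\xi+\tfrac2\mu-\tfrac1\nu\ge 2$ as one of ``the stated hypotheses'', but Theorem \ref{Thm-pons-pascu} --- unlike Theorems \ref{Thm-generalize-pascu}--\ref{Thm-komatu-pascu} --- does not assume it. The actual hypotheses only yield $Q:=\tfrac1\xi+\tfrac2\mu-\tfrac1\nu\ge 1+\tfrac1\mu>1$ (from $\xi\le 1$ and $\nu\ge\mu\ge 1$); e.g.\ $\xi=1$, $\mu=\nu=2$ (admissible, $\gamma=\mu\nu=4>0$) gives $Q=\tfrac32$ and $1-2\xi+\tfrac{2\xi}{\mu}-\tfrac{\xi}{\nu}=-\tfrac12<0$, so your term-by-term check of $c_1,c_2$ collapses. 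The repair is precisely the grouping the paper uses: the coefficients combine to $c_2=\xi a(a+1-Q)$ and $c_1=\xi\left(Q-1-a(3-2\sigma)\right)$ (the paper's $U$ and $V$), which are nonnegative as soon as $Q>1$, $-1<a\le 0$ and $\sigma\le\tfrac12$. The same issue recurs in your second branch, where the $L$-coefficient of $A+B$ equals $\xi(b-a)\left(Q-1-(a+b)\right)$: your justification needs $Q\ge 2$ together with $a+b\le 1$, neither of which is hypothesized, whereas reading the theorem as the paper's proof does --- both exponents in $(-1,0]$, so $a+b\le 0$ --- closes it with only $Q>1$ (equivalently, the paper's $R,S,T\ge 0$ need only $Q>1$). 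One mitigating remark in your favor: some restriction on $b$ is truly unavoidable, since for $a=0$ and $b>Q-1$ the surviving term of \eqref{sigma-cond-geq0} is $(b+1)\xi t^{b-1}\left((Q-1-b)\log(1/t)+(1-2\sigma)\right)$, which is negative for small $t$; so the ``$a\neq b$'' clause must be read with $b\le 0$, a point the theorem's statement leaves as ambiguous as you found it.
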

\begin{proof}
Using $\lambda(t)$ given in \eqref{ponn-oper} gives
\begin{align*}
\dfrac{t\lambda''(t)}{\lambda'(t)}= \begin{cases} \dfrac{\left({\,}a(a-1)-b(b-1)t^{b-a}{\,}\right)}{\left({\,}a-bt^{b-a}{\,}\right)},
 &b\neq a,\\
\dfrac{(1-2a{\,}+{\,}a(a-1)\log(1/t))}{(-1{\,}+{\,}a\log(1/t))},  &  b=a.
\end{cases}
\end{align*}

Case(i): Consider $a=b>-1$ and $\gamma>0$. Substituting the
values of $t\lambda''(t)/\lambda'(t)$ in
\eqref{suff-cond-pascu} and on further simplification gives
\begin{align}\label{eq-pons1}
U(t)(\log(1/t))^2{\,}+{\,}V(t)(\log(1/t)){\,}+{\,}(1-2\sigma)\geq0,
\end{align}
where

$U(t):=a\left(a+1-\dfrac{1}{\xi}-\dfrac{2}{\mu}+\dfrac{1}{\nu}\right)$,

$V(t):=\left(\dfrac{1}{\xi}+\dfrac{2}{\mu}-\dfrac{1}{\nu}-1-a(3-2\sigma)\right)$.

As $(1-2\sigma)>0$, then \eqref{eq-pons1} holds true if $U(t)$
and $V(t)$ are non-negative on the given conditions. Clearly
the inequality \eqref{eq-pons1} under the hypotheses is true.

Case(ii): Consider $-1<a<b$ and $\gamma>0$. Substituting the
value of $t\lambda''(t)/\lambda'(t)$ in \eqref{suff-cond-pascu}
is equivalent to $\phi_t(a)\geq\phi_t(b)$, $t\in(0,1)$, where
\begin{align*}
\phi_t(a):=a(a-1)t^a\log(1/t)-a\left(\left(\dfrac{1}{\xi}+\dfrac{2}{\mu}-\dfrac{1}{\nu}-2\right)\log(1/t)+(1-2\sigma)\right)t^a.
\end{align*}
We will claim that $\phi_t(a)$ is a decreasing function of $a$
for $a\in (-1,0]$. Differentiating $\phi_t(a)$ with respect to
$a$ gives
\begin{align*}
\phi_t'(a):=-t^a({\,}R{\,}+{\,}S{\,}\log(1/t){\,}+{\,}T{\,}(\log(1/t))^2)
\end{align*}
where

$R:=(1-2\sigma),$

$S:=\left(\dfrac{1}{\xi}+\dfrac{2}{\mu}-\dfrac{1}{\nu}-1+a(2\sigma-3)\right),$

$T:=a\left(a-\dfrac{1}{\xi}-\dfrac{2}{\mu}+\dfrac{1}{\nu}+1\right).$\\
The function $\phi_t'(a)\leq0$, if the terms $R$, $S$ and $T$
are non-negative, which is clearly true by the hypothesis when
$a\in(0,1].$ Hence the desired conclusion follows.
\end{proof}
\begin{remark}
\begin{enumerate}\item[]
\item In Theorem $\ref{Thm-pons-pascu}$, consider the case
    when $\xi=0$. We obtain the bound for $a$ as,
$-1{\,}<{\,}a{\,}\leq{\,}0;{\,\,} a=b {\,\,\rm or
\,\,}{\,\,}a{\,}<{\,}b$. This range for $a$ is smaller than
the range obtain in \cite[Theorem 5.3]{sarika-S*} (see also
    {\rm\cite[Theorem 3.2]{Suzeini-starlike}}). For the
    case when $\xi=1$, Theorem $\ref{Thm-pons-pascu}$
    provides weaker bounds for $a$ when compared to
    {\rm\cite[Theorem 5.6]{sarika-C}}.
\item Consider $\xi=1$ and $\sigma=0$. The result obtained
    by Theorem $\ref{Thm-pons-pascu}$ has same bound for
    the case $a=b\leq0$ and the result for $a<b$ coincides
    in \cite[Theorem 5.7]{Mahnaz C}.
\end{enumerate}
\end{remark}

\begin{theorem}\label{Thm-ali-singh-pascu}
Let $\mu\geq 1$, $\gamma>0$, $0<\xi\leq1$ and $\beta<1$ satisfy
\eqref{beta-cond-pascu}, where
\begin{align}\label{ali-singh-operator}
\lambda(t)=\dfrac{(1-k)(3-k)}{2}t^{-k}(1-t^2),\quad 0\leq k<1.
\end{align}
Then $V_\lambda(\mathcal{W}_{\beta}(\alpha,\gamma))\in
M(\sigma,\xi)$, if $k=(1-1/\xi-2/\mu+1/\nu)$ and $\sigma=1/2$.
\end{theorem}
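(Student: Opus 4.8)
The plan is to follow the template established in the proof of Theorem \ref{Thm-pons-pascu}: substitute the weight $\lambda(t)$ from \eqref{ali-singh-operator} into the sufficient condition \eqref{suff-cond-pascu} of Theorem \ref{thm-suff-pascu} and verify that it holds under the stated constraints. First I would write $\lambda(t)=C(t^{-k}-t^{2-k})$ with $C=(1-k)(3-k)/2>0$, and compute the quotient $t\lambda''(t)/\lambda'(t)$. After factoring the common power $t^{-k-1}$ out of both numerator and denominator, this quotient collapses to the rational function $\bigl(k(k+1)-(2-k)(1-k)t^2\bigr)/\bigl(-k-(2-k)t^2\bigr)$, which depends on $t^2$ alone and carries no logarithmic part.

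Next I would exploit the two defining relations of the theorem. Setting $\sigma=1/2$ annihilates the term $(1-2\sigma)/\log(1/t)$ on the right-hand side of \eqref{suff-cond-pascu}, so the condition loses its transcendental part entirely. The choice $k=1-1/\xi-2/\mu+1/\nu$ is engineered precisely so that $1/\xi+2/\mu-1/\nu=1-k$, whence the constant right-hand side $1/\xi-2+2/\mu-1/\nu$ equals exactly $-(1+k)$. Thus \eqref{suff-cond-pascu} reduces to the purely algebraic inequality $\bigl(k(k+1)-(2-k)(1-k)t^2\bigr)/\bigl(-k-(2-k)t^2\bigr)\geq -(1+k)$ for $t\in(0,1)$.

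The point that must be handled carefully is the sign of the denominator: since $0\leq k<1$ one has $\lambda'(t)<0$ on $(0,1)$, equivalently $-k-(2-k)t^2<0$ there, so clearing the denominator \emph{reverses} the inequality. After multiplying through and cancelling the common term $k(k+1)$, everything that survives is the manifestly true estimate $0\leq 2(2-k)t^2$, using $2-k>0$. This confirms \eqref{suff-cond-pascu}, and the conclusion $V_\lambda(\mathcal{W}_\beta(\alpha,\gamma))\in M(1/2,\xi)$ then follows directly from Theorem \ref{thm-suff-pascu}.

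I expect the main obstacle to be essentially bookkeeping rather than conceptual: correctly forming $\lambda'$ and $\lambda''$, tracking the negativity of $\lambda'(t)$ so the inequality flips at the right moment, and observing that $\sigma=1/2$ is exactly the value at which the $\log$-free reduction becomes available. Since the transcendental term disappears at $\sigma=1/2$ while the tailored value of $k$ cancels the constant, no delicate monotonicity argument or integral estimate is required here, in contrast to the heavier analysis underlying Theorem \ref{thm-suff-pascu} itself.
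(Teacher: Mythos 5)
Your proposal is correct and follows essentially the same route as the paper: substitute $\lambda(t)$ into the sufficient condition \eqref{suff-cond-pascu} of Theorem \ref{thm-suff-pascu}, use $k=1-1/\xi-2/\mu+1/\nu$ to turn the right-hand side into $-(1+k)$, and reduce the inequality to the trivially true $0\leq 2(2-k)t^{2}$, with your sign bookkeeping for $\lambda'(t)<0$ handled correctly. Your one improvement is cosmetic but real: you notice that $\sigma=1/2$ annihilates the term $(1-2\sigma)/\log(1/t)$ outright, so the estimate $\log(1/t)\geq 2(1-t)/(1+t)$ that the paper invokes is actually superfluous.
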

\begin{proof}
Using $\lambda(t)$ given in \eqref{ali-singh-operator}, we have
\begin{align*}
\dfrac{t\lambda''(t)}{\lambda(t)}=-\dfrac{[k(1+k)-(2-k)(1-k)t^2]}{[k+(2-k)t^2]}.
\end{align*}
Therefore to obtain the result, it suffice to prove that
\begin{align*}
-\dfrac{[k(1+k)-(2-k)(1-k)t^2]}{[k+(2-k)t^2]}\geq\left(\dfrac{1}{\xi}-2+\dfrac{2}{\mu}-\dfrac{1}{\nu}\right)
+\dfrac{(1-2\sigma)}{\log(1/t)},
\end{align*}
under the given hypothesis. Using the fact that
$\log(1/t)\geq2(1-t)/(1+t)$, for $t\in(0,1)$ the result can be
easily obtained.

\end{proof}


\begin{thebibliography}{20}
\bibitem{Abeer S*}
R. M. Ali, A. O. Badghaish, V. Ravichandran\ and\ A. Swaminathan, Starlikeness of integral transforms and duality,
J. Math. Anal. Appl. {\bf 385} (2012), no.~2, 808--822.

\bibitem{Mahnaz C}
R. M. Ali, M. M. Nargesi\ and\ V. Ravichandran, Convexity of integral transforms and duality,
Complex Var. Elliptic Equ. (2012), 1-22. DOI:10.1080/17476933.2012.693483

%
\bibitem{pons-star}
R. Balasubramanian, S. Ponnusamy\ and\ D. J. Prabhakaran, Duality techniques for certain integral transforms to be starlike,
J. Math. Anal. Appl. {\bf 293} (2004), no.~1, 355--373.

\bibitem{saigo}
J. H. Choi, Y. C. Kim\ and\ M. Saigo, Geometric properties of convolution operators defined by Gaussian hypergeometric functions, Integral Transforms Spec. Funct.
{\bf 13} (2002), no.~2, 117--130.

\bibitem{Swami M}
S. Devi and A. Swaminathan, Integral transforms of functions to be in the Pascu class using duality techniques,
To appear in J. Complex Anal., Article ID 473069, 11 pp, [arXiv:1304.0696 (April 2013)].


\bibitem{DU}
P. L. Duren, {\it Univalent functions}, Grundlehren der Mathematischen Wissenschaften, 259, Springer, New York, 1983.

\bibitem{Four-rus-extremal}
R. Fournier\ and\ S. Ruscheweyh, On two extremal problems related to univalent functions, Rocky Mountain J. Math. {\bf 24} (1994), no.~2, 529--538.

\bibitem{kim&ronning}
Y. C. Kim\ and\ F. R\o nning, Integral transforms of certain subclasses of analytic functions,
J. Math. Anal. Appl. {\bf 258} (2001), no.~2, 466--489.

\bibitem{Suzeini-starlike}
R. Omar, S. A. Halim and R. W. Ibrahim, Starlikeness of order $\delta$ for certain integral transforms using duality, Pre-print.

\bibitem{Suzeini-convex}
R. Omar, S. A. Halim and R. W. Ibrahim, Convexity and Starlikeness of certain integral transforms using duality, Pre-print.

\bibitem{Pascu}
N. N. Pascu\ and\ V. Podaru, On the radius of alpha-starlikeness for starlike functions of order beta,
in {\it Complex analysis---fifth Romanian-Finnish seminar, Part 1 (Bucharest, 1981)}, 336--349, Lecture Notes in Math., 1013 Springer, Berlin.

\bibitem{rag M}
K. Raghavendar\ and\ A. Swaminathan, Integral transforms of functions to be in certain class defined by the combination of
starlike and convex functions, Comput. Math. Appl. {\bf 63} (2012), no.~8, 1296--1304

\bibitem{Rus}
S. Ruscheweyh, {\it Convolutions in geometric function theory}, S\'eminaire de Math\'ematiques Sup\'erieures, 83,
Presses Univ. Montr\'eal, Montreal, QC, 1982.

\bibitem{Ru75} S. Ruscheweyh, Duality for Hadamard products with applications to extremal problems for functions regular in the unit disc, Trans. Amer. Math.
Soc. {\bf 210} (1975), 63--74.

\bibitem{sarika-S*}
S. Verma, S. Gupta\ and\ S. Singh, Duality and Integral Transform of a Class of Analytic Functions,
Bull. Malaysian Math. Soc. To apppear.

\bibitem{sarika-C}
S. Verma, S. Gupta\ and\ S. Singh, Order of convexity of Integral Transforms and Duality, arXiv:1305.0732 (May 2013).
\end{thebibliography}
\end{document}